\newcommand{\al}{\alpha}
\newcommand{\V}{\mathcal{V}}
\newcommand{\R}{\mathbb{R}}
\newcommand{\la}{\lambda}
\newcommand{\La}{\Lambda}
\newcommand{\si}{\sigma}
\newcommand{\Si}{\Sigma}
\newcommand{\pr}{\prime}
\newcommand{\Om}{\Omega}
\newcommand{\ga}{\gamma}
\newcommand{\ka}{\kappa}
\newcommand{\ti}{\tilde}
\newcommand{\wti}{\widetilde}
\newcommand{\mL}{\mathcal{L}}
\newcommand{\tB}{\widetilde{B}}
\newcommand{\tA}{\widetilde{A}}
\newcommand{\dist}{\operatorname{dist}}
\newcommand{\Div}{\operatorname{div}}
\newcommand{\tr}{\operatorname{tr}}
\newcommand{\Ric}{\operatorname{Ric}}
\newcommand{\Area}{\operatorname{Area}}
\newcommand{\Hom}{\operatorname{Hom}}
\newcommand{\mc}{\mathcal}
\begin{document}

\newtheorem{theorem}{Theorem}[section]
\newtheorem{proposition}[theorem]{Proposition}
\newtheorem{corollary}[theorem]{Corollary}

\newtheorem{claim}{Claim}

\theoremstyle{remark}
\newtheorem{remark}[theorem]{Remark}

\theoremstyle{definition}
\newtheorem{definition}[theorem]{Definition}

\theoremstyle{plain}
\newtheorem{lemma}[theorem]{Lemma}

\numberwithin{equation}{section}

\newtheorem{conjecture}[theorem]{Conjecture}


\title[Curvature estimates near free boundary]{Curvature estimates for stable free boundary minimal hypersurfaces}
\author[Qiang Guang]{Qiang Guang}
\address{Department of Mathematics, University of California Santa Barbara, Santa Barbara, CA 93106, USA}
\email{guang@math.ucsb.edu}

\author[Martin Li]{Martin Man-chun Li}
\address{Department of Mathematics, The Chinese University of Hong Kong, Shatin, N.T., Hong Kong}
\email{martinli@math.cuhk.edu.hk}

\author[Xin Zhou]{Xin Zhou}
\address{Department of Mathematics, South Hall 6501, University of California Santa Barbara, Santa Barbara, CA 93106, USA}
\email{zhou@math.ucsb.edu}
\maketitle

\pdfbookmark[0]{}{beg}

\begin{abstract}
In this paper, we prove uniform curvature estimates for immersed stable free boundary minimal hypersurfaces satisfying a uniform area bound, which generalizes the celebrated Schoen-Simon-Yau interior curvature estimates \cite{SSY75} up to the free boundary. Our curvature estimates imply a smooth compactness theorem which is an essential ingredient in the min-max theory of free boundary minimal hypersurfaces developed by the last two authors \cite{LiZ16}. We also prove a monotonicity formula for free boundary minimal submanifolds in Riemannian manifolds for any dimension and codimension. For $3$-manifolds with boundary, we prove a stronger curvature estimate for properly embedded stable free boundary minimal surfaces without a-prioi area bound. This generalizes Schoen's interior curvature estimates \cite{Schoen83} to the free boundary setting. 
Our proof uses the theory of minimal laminations developed by Colding and Minicozzi in \cite{CM13}.
\end{abstract}

\section{Introduction}
\label{S:intro}

Let $(M,g)$ be an $m$-dimensional Riemannian manifold, and $N$ be an embedded $n$-dimensional submanifold called the \emph{constraint submanifold}. If we consider the $k$-dimensional area functional on the space of immersed $k$-submanifolds $\Si \subset M$ with boundary $\partial \Sigma$ lying on the constraint submanifold $N$, the critical points are called \emph{free boundary minimal submanifolds}. These are minimal submanifolds $\Si \subset M$ meeting $N$ orthogonally along $\partial \Si$ (c.f. Definition \ref{D:freeboundary}). Such a critical point is said to be \emph{stable} (c.f. Definition \ref{D:stable}) if it minimizes area up to second order. 
The purpose of this paper is three-fold. First, we prove uniform curvature estimates (Theorem \ref{T:main}) for \emph{immersed} stable free boundary minimal hypersurfaces satisfying a uniform area bound. Second, we prove a monotonicity formula (Theorem \ref{T:monotonicity}) near the boundary for free boundary minimal submanifolds in any dimension and codimension. 
Finally, we use Colding-Minicozzi's theory of minimal laminations (adapted to the free boundary setting) to establish a stronger curvature estimate (Theorem \ref{T:curvature estimates 2-D}) for \emph{properly embedded} stable free boundary minimal surfaces in compact Riemannian $3$-manifolds with boundary, \emph{without} assuming a uniform area bound on the minimal surfaces.

Curvature estimates for immersed stable minimal hypersurfaces in Riemannian manifolds were first proved in the celebrated work of Schoen, Simon and Yau in \cite{SSY75}. Such curvature estimates have profound applications in the theory of minimal hypersurfaces. For example, Pitts \cite{P81} made use of Schoen-Simon-Yau's estimates in an essential way to establish the regularity of minimal hypersurfaces $\Si$ constructed by min-max methods, for $2 \leq \dim \Si \leq 5$ due to the dimension restriction in \cite{SSY75}. Shortly after, Schoen and Simon \cite{SS81} generalized these curvature estimates to any dimension (but still for codimension one, i.e. \emph{hypersurfaces}) for \emph{embedded} stable minimal hypersurfaces, which enabled them to complete Pitts' program for $\dim \Si >5$.

In this paper, we establish uniform curvature estimates in the free boundary setting. The theorem below follows from our curvature estimates near the free boundary (Theorem \ref{T:main-curvature-estimates}) and the interior curvature estimates \cite{SSY75}.





\begin{theorem}
\label{T:main}
Assume $2 \leq n \leq 6$. 
Let $M^{n+1}$ be a Riemannian manifold and $N^n \subset M$ be an embedded hypersurface. Suppose $U \subset \subset M$ is an open subset. 
If $(\Si,\partial \Si) \subset (U,N \cap U)$ is an immersed (embedded when $n=6$) stable (two-sided) free boundary minimal hypersurface with $\Area(\Si) \leq C_0$, then
\[ |A^\Sigma|^2 (x) \leq \frac{C_1}{\dist^2_M(x,\partial U)} \quad \text{ for all $x \in \Sigma$},\]
where $C_1>0$ is a constant depending only on $C_0$, $U$ and $N \cap U$.
\end{theorem}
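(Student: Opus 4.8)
The plan is to patch together an \emph{interior} and a \emph{near-boundary} curvature estimate, the dichotomy being governed by the size of $\dist_M(x,N)$ relative to $d:=\dist_M(x,\partial U)$; in each regime the target is the scale-invariant bound $|A^\Sigma|^2(x)\le C_1 d^{-2}$. A preliminary reduction makes this work: by the monotonicity formula for minimal submanifolds, the single global bound $\Area(\Sigma)\le C_0$ upgrades to a \emph{uniform local density bound} $\Area(\Sigma\cap B_\rho(p))\le \Lambda_0\,\rho^{\,n}$ for every $p\in\Sigma$ and every $\rho\le\rho_0$, with $\rho_0>0$ and $\Lambda_0$ depending only on $C_0$, $U$ and $N\cap U$ (using also that $\overline U$ is compact, so $d$ is bounded above). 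It is this dimensionless area control at every small scale --- not the bare bound $C_0$ --- that forces the curvature estimate to scale like $d^{-2}$ rather than degenerating as $d\to 0$.

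In the interior case $\dist_M(x,N)\ge d/4$, set $R:=\tfrac{1}{4}\min\{d,\rho_0\}$. Then the ball $B_{2R}(x)$ is disjoint from $N$, hence from $\partial\Sigma$, is compactly contained in $U$, and satisfies $\Area(\Sigma\cap B_{2R}(x))\le \Lambda_0(2R)^n$. Thus $\Sigma\cap B_{2R}(x)$ is a two-sided stable minimal hypersurface without boundary carrying a dimensionless area bound, and the Schoen--Simon--Yau interior estimate \cite{SSY75} for $2\le n\le 5$ --- respectively the Schoen--Simon estimate \cite{SS81} for embedded hypersurfaces when $n=6$ --- gives $|A^\Sigma|^2(x)\le C R^{-2}$, with $C$ depending only on $n$, $\Lambda_0$ and a bound for the geometry of $M$ over $U$. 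Since $R$ is comparable to $d$, this is $|A^\Sigma|^2(x)\le C' d^{-2}$. In the near-boundary case $\dist_M(x,N)<d/4$, pick $y\in N$ with $\dist_M(x,y)<d/4$; then $\dist_M(y,\partial U)>\tfrac{3}{4} d$, so a ball about $y$ of radius comparable to $\min\{d,\rho_0\}$ is compactly contained in $U$, and applying the near-boundary curvature estimate Theorem \ref{T:main-curvature-estimates} there bounds $|A^\Sigma|^2$ by $C'' d^{-2}$ on a sub-ball containing $x$, with $C''$ depending only on $C_0$, $U$ and $N\cap U$. Taking $C_1$ the larger of the two constants completes the deduction.

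Granting Theorem \ref{T:main-curvature-estimates}, this patching is routine; the only care needed is to keep every constant dependent on nothing beyond $C_0$, $U$, $N\cap U$, and to route $n=6$ through the embedded theory in both ingredients. The substance --- and the main obstacle --- is Theorem \ref{T:main-curvature-estimates} itself, which I would establish along the lines of \cite{SSY75}: combine the stability inequality with Simons' identity to derive first an $L^{p}$ and then a pointwise bound on $|A^\Sigma|$. The new feature is the free boundary: the second variation carries a boundary term over $\partial\Sigma$ involving the second fundamental form of $N$, and each integration by parts produces boundary integrals that must be absorbed using a bound on the geometry of $N\cap U$ (conveniently organized via Fermi coordinates along $N$, or a reflection across $N$); one also needs the monotonicity formula near the free boundary (Theorem \ref{T:monotonicity}) to get the density bound up to $N$. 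The hard parts are controlling these boundary terms, choosing cutoff functions that are admissible up to $\partial\Sigma$, and --- in any compactness or blow-up step --- ensuring the convergence is smooth up to the free boundary, so that a blow-up limit is a genuine stable free boundary minimal hypersurface (which, reflected across the flattened $N$, is a complete two-sided stable minimal hypersurface, hence flat when $2\le n\le 6$) rather than merely a minimal lamination.
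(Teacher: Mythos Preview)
Your reduction of Theorem~\ref{T:main} to Theorem~\ref{T:main-curvature-estimates} plus the interior estimates of \cite{SSY75}/\cite{SS81} is correct and is exactly the paper's route; the paper states this in one line, and your patching argument (dichotomy on $\dist_M(x,N)$ versus $d$, with the monotonicity formulae supplying the local density bound) fills in the details in the expected way.

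Where you diverge from the paper is in your proposed proof of Theorem~\ref{T:main-curvature-estimates}. You suggest adapting the original \cite{SSY75} machinery directly---Simons' identity, stability, $L^p$-to-pointwise bootstrap---while tracking the boundary terms coming from $A^N$ and from integrations by parts. The paper instead uses a pure blow-up/contradiction argument: assume the estimate fails along a sequence, perform a point-picking rescaling, use the boundary monotonicity formula (Theorem~\ref{T:monotonicity}) together with interior monotonicity to verify Euclidean area growth of the blow-ups, extract a smooth limit via Theorem~\ref{T:convergence}, and then apply the reflection principle (Lemma~\ref{L:reflection}) so that the Bernstein theorem of \cite{SSY75} (or \cite{SS81} when $n=6$) forces flatness, contradicting $|A|=1$ at the blow-up basepoint. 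You in fact describe this mechanism in your final sentence; that is the paper's actual argument, not a fallback. The direct integral-estimate approach you outline first is plausible but would require substantially more work to close the boundary terms, whereas the blow-up route sidesteps all of that by reducing to the Euclidean Bernstein theorem after reflection.
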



An important consequence of Theorem \ref{T:main} is a smooth compactness theorem for stable free boundary minimal hypersurfaces which are \emph{almost properly embedded} (c.f. \cite[Theorem 2.15]{LiZ16}). As in \cite{P81}, this is a key ingredient in the regularity part of the min-max theory for free boundary minimal hypersurfaces in compact Riemannian manifolds with boundary, which is developed in \cite{LiZ16} by the last two authors. We remark that any compact Riemannian manifold $\Omega$ with boundary $\partial \Omega = N$ can be extended to a closed Riemannian manifold $M$ with $\Omega$ as a compact domain. Hence, our curvature estimates above can be applied in this situation as well.


Our proof of the curvature estimates uses a contradiction argument. If the curvature estimates do not hold, we can apply a blow-up argument to a sequence of counterexamples together with a reflection principle to obtain a non-flat complete stable immersed minimal hypersurface $\Si_\infty$ in $\R^{n+1}$ \emph{without boundary}. We then apply the Bernstein Theorem in \cite[Theorem 2]{SSY75} (which only holds for $2 \leq n \leq 5$) or \cite[Theorem 3]{SS81} (when $n=6$ for embedded hypersurface) to conclude that $\Si_\infty$ is flat, hence resulting in a contradiction. Using Ros's estimates \cite[Theorem 9 and Corollary 11]{Ros06} for one-sided stable minimal surfaces, our result also holds true when $n=2$ if one removes the two-sided condition. When $n \geq 7$, the stable free boundary minimal hypersurface may contain a singular set with Hausdorff codimension at least seven. This follows from similar arguments as in \cite{SS81}. To keep this paper less technical, the details will appear in a forthcoming paper.

The classical monotonicity formula plays an important role in the regularity theory for minimal submanifolds, even without the stability assumption. Unfortunately, it ceases to hold once the ball hits the boundary of the minimal submanifold. Therefore, to study the boundary regularity of free boundary minimal submanifolds, we need a monotonicity formula which holds for balls centered at points lying on the constraint submanifold $N$. By an isometric embedding of $M$ into some Euclidean space $\R^L$, we establish a monotonicity formula (Theorem \ref{T:monotonicity}) for free boundary minimal submanifolds relative to \emph{Euclidean} balls of $\R^L$ centered at points on the constraint submanifold $N$.  

We remark that Gr\"{u}ter and Jost proved in \cite{GJ86} a version of monotonicity formula (Theorem 3.1 in \cite{GJ86}) and used it to establish an important Allard-type regularity theorem for varifolds with free boundary. However, 
the monotonicity formula they obtained \cite[Theorem 3.1]{GJ86} contains an extra term involving the mass of the varifold in a reflected ball, which makes it difficult to apply in some situations (in \cite{LiZ16} for example). 
In contrast, our monotonicity formula (Theorem \ref{T:monotonicity}) does not require any reflection which makes it more readily applicable. Moreover, the formula holds in the Riemannian manifold setting for stationary varifolds with free boundary in any dimension and codimension. We expect that our monotonicity formula might be useful in the regularity theory for other natural free boundary problem in calibrated geometries (see for example \cite{Bu03} and \cite{Lo09}). We would like to mention that other monotonicity formulas have been proved for free boundary minimal submanifolds in a Euclidean unit ball (\cite{B12}, \cite{V16}).

Consider now the case of a compact Riemannian $3$-manifold $M$ with boundary $\partial M$, by the remark in the paragraph after Theorem \ref{T:main}, we can assume that $M$ is a compact subdomain of a larger Riemannian manifold $\widetilde{M}$ without boundary and $N=\partial M$ is the constraint submanifold. 
Furthermore, if we assume that the free boundary minimal surface $\Sigma$ is \emph{properly embedded} in $M$ (i.e. $\Sigma \subset M$ and $\Sigma \cap \partial M=\partial \Sigma$), then we prove a stronger uniform curvature estimate similar to the one in Theorem \ref{T:main}, but \emph{independent of the area of $\Sigma$}.

\begin{theorem}
\label{T:curvature estimates 2-D}
Let $(M^3, g)$ be a compact Riemannian 3-manifold with boundary $\partial M \neq \emptyset$. Then there exists a constant $C_2>0$ depending only on the geometry of $M$ and $\partial M$, such that if $(\Sigma,\partial \Sigma) \subset (M,\partial M)$ is a compact, properly embedded stable minimal surface with free boundary, then 
\begin{equation*}
\sup_{x\in \Sigma} |A|^2(x)\leq C_2.
\end{equation*}
\end{theorem}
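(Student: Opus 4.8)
The plan is to argue by contradiction, following the strategy that Schoen used in the interior case \cite{Schoen83} but now carried out near the free boundary and organized around Colding--Minicozzi's minimal lamination theory \cite{CM13}. Suppose the estimate fails: then there is a sequence of compact, properly embedded stable free boundary minimal surfaces $(\Sigma_i, \partial \Sigma_i) \subset (M, \partial M)$ and points $x_i \in \Sigma_i$ with $\la_i := |A^{\Sigma_i}|(x_i) \to \infty$. By a standard point-selection (Schoen's trick, adapted to allow points on $\partial \Sigma_i$), we may assume $|A^{\Sigma_i}|$ is comparably large on intrinsic balls of radius $\sim 1$ in the rescaled surfaces. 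Passing to a subsequence we may assume $x_i \to x_\infty \in M$. There are two cases. If $x_\infty \in \mathrm{int}(M)$, then after rescaling by $\la_i$ we are in the interior situation and the argument reduces to Schoen's interior estimate; the only subtlety is that a definite portion of $\Sigma_i$ near $x_i$ could be near $\partial M$, which we handle by the reflection principle as in Theorem~\ref{T:main}. The essential new case is $x_\infty \in \partial M = N$.

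In that case I would introduce Fermi (boundary normal) coordinates near $x_\infty$, reflect the metric across $N$ to obtain a $C^{2,\alpha}$ (or Lipschitz, depending on the regularity of the doubling) metric $\tilde g$ on a two-sided neighborhood, and double each $\Sigma_i$ across its free boundary to get properly embedded minimal surfaces $\tilde\Sigma_i$ (with respect to $\tilde g$) that are \emph{stable} — stability of the doubled surface follows from the free boundary condition together with the stability inequality for $\Sigma_i$ with its Neumann-type boundary term, exactly as in the reduction used for Theorem~\ref{T:main}. Now rescale $\tilde g$ and $\tilde\Sigma_i$ about $x_i$ by $\la_i$. In the limit the metrics converge (in $C^{1,\alpha}_{\mathrm{loc}}$, smoothly away from the reflection hypersurface) to the flat metric on $\R^3$, and we have a sequence of embedded stable minimal surfaces in balls of radius $\to \infty$ with $|A|(x_i) = 1$ and a uniform curvature bound on compact sets is \emph{not} assumed — this is precisely the setting of the Colding--Minicozzi lamination/compactness machinery. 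Two outcomes are possible: either the second fundamental forms stay locally bounded, in which case $\tilde\Sigma_i$ converges smoothly to a complete embedded stable minimal surface $\Sigma_\infty \subset \R^3$ with $|A^{\Sigma_\infty}|(0) = 1$, contradicting the fact that the only complete stable minimal surface in $\R^3$ is a plane (do Carmo--Peng, Fischer-Colbrie--Schoen, Pogorelov); or the curvatures blow up, in which case the Colding--Minicozzi theory produces a limit minimal lamination of an open subset of $\R^3$ with a nonempty singular set $\mathcal{S}$ along which the sheets of the lamination spiral (locally looking like a foliation by parallel planes with a singular line transverse to them).

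The heart of the argument is to rule out this singular lamination. I would use the global structure available in this situation: by the one-sided curvature estimate and the structure theorem of \cite{CM13}, near a singular point the leaves look like multivalued graphs converging to a plane, so the lamination, if nonempty singular set occurs, forces the existence of a stable limit leaf which (by the maximum principle / Fischer-Colbrie--Schoen) must be a plane $P$; then a removable singularity / foliation argument shows the singular set is a line $\ell$ orthogonal to $P$. But then, working back on the original surfaces $\tilde\Sigma_i$ before taking the limit, one finds embedded minimal disks in $\R^3$ whose boundaries lie far away and which contain arbitrarily sharp spiralling — this is incompatible with stability, since a neighborhood of the singular line would contain a stable minimal surface that is a nontrivial multigraph, again contradicting that complete (or the relevant half-space) stable minimal surfaces are planar. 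Concretely, I expect to invoke either Schoen's original blow-up analysis showing the limit is a plane with multiplicity (hence $|A| \to 0$, contradiction), or the fact that the one-sided curvature estimate of Colding--Minicozzi applies to stable surfaces to directly bound $|A^{\tilde\Sigma_i}|$ near $x_i$, contradicting $|A|(x_i) = 1$ after rescaling back.

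The main obstacle, and the step that requires the most care, is handling the reflection at $N$: the doubled metric $\tilde g$ is generally only $C^{1,1}$ (Lipschitz first derivatives) across $N$, not smooth, so one must check that (i) the Colding--Minicozzi lamination theory and one-sided curvature estimates are robust enough to apply for such metrics — this should be fine since those results hold for general Riemannian metrics with appropriate bounds and are local — and (ii) the doubled surfaces $\tilde\Sigma_i$ are genuinely stable minimal surfaces for $\tilde g$, including across the reflection locus; this uses that the free boundary condition $\Sigma_i \perp N$ makes the doubling $C^{1,1}$ and the gluing of Jacobi fields respects the Neumann condition, so the stability inequality passes to $\tilde\Sigma_i$. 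I would also need to verify that properness of $\Sigma_i$ in $M$ gives properness (or at least the required embeddedness on compact sets) of the blow-ups, so that no sheet of the lamination can accumulate in an uncontrolled way near $x_\infty$; this is where the global compactness of $M$ and the a priori area bound that properly embedded surfaces automatically satisfy on fixed balls (via monotonicity, Theorem~\ref{T:monotonicity}) — even though no \emph{global} area bound is assumed — enters to keep the picture under control.
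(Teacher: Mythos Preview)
Your overall strategy --- blow up a contradicting sequence, pass to a lamination limit, invoke Bernstein --- matches the paper's, but the execution has two genuine gaps.

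First, you under-use the point-selection. Once Schoen's point-picking is carried out correctly (as in the proof of Theorem~\ref{T:main-curvature-estimates}: maximize $|A^{\Sigma_i}|(y)\,\dist_M(y,\partial B(x_i,r_i))$ over a shrinking ball and rescale by $\lambda_i=|A^{\Sigma_i}|(y_i)$), the rescaled surfaces satisfy $|A^{\Sigma_i'}|\le 1+o(1)$ on balls of radius $\to\infty$. So a uniform curvature bound on compact sets \emph{is} available, and your entire singular-lamination alternative (multivalued graphs, one-sided curvature estimate, removable-singularity argument along a transverse line) never arises. The paper only needs the bounded-curvature lamination compactness of \cite[Proposition~B.1]{CM13} and its free boundary analogue (Theorem~\ref{T:2d-lamination convergence}), not the full Colding--Minicozzi structure theory. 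Without an area bound the limit is a lamination rather than a single surface; the leaf $\Lambda_0$ through $0$ has $|A^{\Lambda_0}|(0)=1$, any accumulating leaf is stable and hence planar, so $\Lambda_0$ is isolated and is therefore itself a complete stable minimal surface in $\R^3$, contradicting Bernstein.

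Second, you reflect at the wrong stage. Doubling the metric across $N=\partial M$ in Fermi coordinates yields in general only a Lipschitz metric (the normal derivative $\partial_t g_{ij}|_{t=0}$ encodes the second fundamental form of $N$ and flips sign under reflection), not $C^{2,\alpha}$ or even $C^{1,\alpha}$ as you write; pushing lamination compactness and Bernstein-type rigidity through a merely Lipschitz ambient metric is a real technical burden you have not addressed. The paper sidesteps this entirely by blowing up \emph{first}, so that the limiting constraint hypersurface is a flat hyperplane $P\subset T_xM\cong\R^3$, and only then reflecting the limit lamination across $P$ (Lemma~\ref{L:lamination reflection}). In the flat limit the reflection is a Euclidean isometry and the doubled leaves are smooth stable minimal surfaces in $\R^3$, where the standard Bernstein theorem applies directly.

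Finally, your closing appeal to the monotonicity formula for local area control is both unnecessary (lamination convergence with bounded curvature needs no area bound) and unavailable (monotonicity converts a large-scale area bound into small-scale ones, but here you have none at any scale to begin with).
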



\begin{remark}
For simplicity, we assume that $\Sigma$ is compact in Theorem \ref{T:curvature estimates 2-D}. This ensures that $\Sigma$ has no boundary points lying in the interior of $M$. Without the compactness assumption, similar uniform estimates still hold as long as we stay away from the points in $\overline{\Sigma} \setminus \Sigma$ inside the interior of $M$ as in Theorem \ref{T:main}.
Note that $\Sigma$ is always locally two-sided under the embeddedness assumption.
\end{remark}

Our proof of Theorem \ref{T:curvature estimates 2-D} involves the theory of minimal laminations which require the minimal surface to be \emph{embedded}.
In view of the celebrated interior curvature estimates for stable \emph{immersed} minimal surfaces in $3$-manifolds by Schoen \cite{Schoen83} (see also \cite{CM11} and \cite{Ros06}), we conjecture that the embeddedness of $\Sigma$ is unnecessary. 

\begin{conjecture}
Theorem \ref{T:curvature estimates 2-D} holds even when $\Sigma$ is immersed.
\end{conjecture}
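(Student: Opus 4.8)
The plan is to replace the single ingredient in the proof of Theorem~\ref{T:curvature estimates 2-D} that forces embeddedness --- the appeal to Colding--Minicozzi's lamination theory to bound the local area of a blow-up sequence --- by the mechanism underlying Schoen's interior estimate \cite{Schoen83} for \emph{immersed} stable minimal surfaces in $3$-manifolds, adapted up to the free boundary. The scheme is a contradiction argument. If the estimate fails for some fixed $(M^3,g)$, there are compact immersed two-sided stable free boundary minimal surfaces $(\Sigma_i,\partial\Sigma_i)\subset(M,\partial M)$ with $\lambda_i^2:=\sup_{\Sigma_i}|A^{\Sigma_i}|^2\to\infty$, attained at $x_i\in\Sigma_i$; passing to a subsequence, $x_i\to x_\infty$, and after replacing $\Sigma_i$ by its connected component through $x_i$ (still stable), the rescalings $\widetilde\Sigma_i:=\lambda_i\cdot(\Sigma_i-x_i)$ in $(M,\lambda_i^2 g)$ recentered at $x_i$ satisfy $|A^{\widetilde\Sigma_i}|\le 1$ with equality at the origin. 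The rescaled ambient manifolds converge in $C^\infty_{loc}$ either to flat $\mathbb{R}^3$ (if $\lambda_i\dist_g(x_i,\partial M)\to\infty$), which is exactly Schoen's interior situation, or to the flat half-space $(\mathbb{R}^3_+,\delta)$ with \emph{totally geodesic} boundary plane (if these distances stay bounded), since the second fundamental form of $\partial M$ rescales by $\lambda_i^{-1}$; assume the latter.

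The crux is to obtain uniform area bounds $\Area(\widetilde\Sigma_i\cap B_R(0))\le C(R)$, so that a subsequence converges --- with finite multiplicity and smoothly, including up to the free boundary by boundary Schauder estimates and the uniform curvature bound --- to a complete immersed stable minimal surface $\Sigma_\infty\subset\mathbb{R}^3_+$ meeting $\{x_3=0\}$ orthogonally, with $|A^{\Sigma_\infty}|\le 1$ and $|A^{\Sigma_\infty}|(0)=1$, hence not flat. To produce this area bound one reruns Schoen's area-bound-free mechanism with free boundary corrections, combining: (i) the free boundary stability inequality, whose decisive feature is that admissible test functions need not vanish on $\partial\widetilde\Sigma_i$, and whose extra boundary term $\int_{\partial\widetilde\Sigma_i}\mathrm{II}^{\partial M}(\nu,\nu)\phi^2$ is lower order because $\mathrm{II}^{\partial M}$ is uniformly bounded and tends to zero after rescaling; (ii) the Gauss equation $\tfrac12|A^{\widetilde\Sigma_i}|^2=K^{amb}-K_{\widetilde\Sigma_i}$ together with Gauss--Bonnet on $\widetilde\Sigma_i\cap B_R$, whose boundary contribution splits into the part along $\partial\widetilde\Sigma_i$, where the geodesic curvature of $\partial\widetilde\Sigma_i$ in $\widetilde\Sigma_i$ equals $\mathrm{II}^{\partial M}$ of the rescaled boundary by the orthogonality condition and is thus negligible, the part along $\widetilde\Sigma_i\cap\partial B_R$, and finitely many corner terms with angles in $[0,\pi]$; and (iii) the Simons-type identity for $|A^{\widetilde\Sigma_i}|$ with the improved ($n=2$) Kato inequality, integrated against a cutoff, where the integration by parts produces a term over $\partial\widetilde\Sigma_i$ absorbed using the Robin-type boundary condition $\nabla_{\bar\nu}A^{\widetilde\Sigma_i}=($ lower order in $A^{\widetilde\Sigma_i}$, $\mathrm{II}^{\partial M}$, $\nabla\mathrm{II}^{\partial M})$ obtained by differentiating the orthogonality condition along the free boundary. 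Feeding (i)--(iii) into one another as in \cite{Schoen83} (see also \cite{CM11, Ros06}) yields $\int_{\widetilde\Sigma_i\cap B_R}|A^{\widetilde\Sigma_i}|^2\le C(R)$ --- and in parallel a bound on $\mathrm{Length}(\partial\widetilde\Sigma_i\cap B_R)$ --- hence the area bound.

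Granting compactness, the endgame is clean. Reflect $\Sigma_\infty$ across the totally geodesic plane $\{x_3=0\}$; since $\Sigma_\infty$ is orthogonal to it, the reflected set $\widehat\Sigma_\infty$ is a complete immersed minimal surface in $\mathbb{R}^3$ \emph{without} boundary (real-analytic, by elliptic regularity for the minimal surface system), and it is not flat. Moreover $\widehat\Sigma_\infty$ is stable: splitting a compactly supported test function into even and odd parts across $\{x_3=0\}$ kills the cross term by symmetry, the even part contributes twice the free boundary stability form of $\Sigma_\infty$ (nonnegative, the boundary term now vanishing since the limiting boundary is totally geodesic), and the odd part contributes twice the fixed-boundary stability form of $\Sigma_\infty$, a special case of free boundary stability, also nonnegative. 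But a complete two-sided immersed stable minimal surface in $\mathbb{R}^3$ is a plane (do Carmo--Peng, Fischer--Colbrie--Schoen, Pogorelov; the two-sidedness can be dropped by \cite{Ros06}, which also handles the one-sided case of the statement exactly as in the discussion after Theorem~\ref{T:main}), contradicting $|A^{\widehat\Sigma_\infty}|(0)=1$.

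I expect the decisive obstacle to be steps (ii)--(iii): rerunning Schoen's delicate interplay of stability, Simons' inequality and Gauss--Bonnet in the presence of the free boundary while keeping every new boundary integral under control with no a priori bound on $\mathrm{Length}(\partial\Sigma_i)$ --- the length must itself be tamed by the same stability and Gauss--Bonnet mechanism, in lockstep with the area. An alternative packaging that avoids redoing Schoen's argument is to double $(M,g)$ across $\partial M$ to $(\widehat M,\widehat g)$ with a Lipschitz metric, verify as above that the doubled immersed surface is stable there (its Jacobi operator picking up a distributional term along the reflection locus that matches the $\mathrm{II}^{\partial M}$ boundary term of free boundary stability), and invoke Schoen's interior estimate directly; the obstacle then shifts to justifying that estimate for a merely Lipschitz ambient metric whose curvature is a bounded measure with singular part supported on $\partial M$ --- the same analytic content, repackaged, which one would ultimately resolve by the Fermi-coordinate structure $\widehat g=dt^2+g_{|t|}$ and an approximation argument.
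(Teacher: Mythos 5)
You have not proved this statement, and neither does the paper: it is posed there as an open conjecture precisely because the only known mechanism for removing the area hypothesis in Theorem \ref{T:curvature estimates 2-D} --- Colding--Minicozzi lamination compactness, used in Section \ref{S:2d-curvature-estimates} together with the lamination reflection principle --- requires embeddedness, while Schoen's interior estimate \cite{Schoen83} is the motivation the authors themselves cite for believing the immersed case should hold. Your blow-up frame, the dichotomy between interior-type and boundary-type limits, and the endgame (reflection across the limiting totally geodesic plane, even/odd splitting of test functions to get stability of the doubled surface, then do Carmo--Peng/Fischer-Colbrie--Schoen, with Ros for the one-sided case) are all sound and consistent with the paper's Lemma \ref{L:reflection} and the Step 2 argument of Theorem \ref{T:main-curvature-estimates}. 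But the entire weight of the proof rests on the step you yourself flag as the ``decisive obstacle'': producing $\Area(\widetilde\Sigma_i\cap B_R)\le C(R)$ (equivalently, running Schoen's stability/Simons/Gauss--Bonnet loop up to the free boundary with no a priori control on $\Area(\Sigma_i)$ or $\mathrm{Length}(\partial\Sigma_i)$). For immersed surfaces a uniform curvature bound gives no local area bound --- arbitrarily many nearly parallel sheets are possible, which is exactly the phenomenon the lamination structure is designed to absorb in the embedded case --- so this step cannot be waved through; it is the mathematical content of the conjecture, and your items (ii)--(iii) describe the new boundary integrals that must be controlled without actually estimating them.

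Your alternative packaging via doubling $(M,g)$ across $\partial M$ has the same unresolved core: unless $\partial M$ is totally geodesic, the doubled metric is only Lipschitz, its curvature has a singular part along the reflection locus, and Schoen's estimate is not available in that regularity class; asserting that the Fermi-coordinate structure and an approximation argument ``would ultimately resolve'' this is again a restatement of the problem rather than a proof. In short: the proposal is a reasonable research program for the conjecture (and arguably the natural one), but it contains a genuine gap at its central step, so it does not establish the statement; the paper, for its part, offers no proof to compare against, only the embedded result obtained by a genuinely different (lamination-theoretic) route.
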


The organization of the paper is as follows. In section \ref{S:definitions}, we give the basic definitions for free boundary minimal submanifolds in any dimension and codimension and discuss the notion of stability in the hypersurface case. In section \ref{S:monotonicity}, we prove the monotonicity formula (Theorem \ref{T:monotonicity}) for stationary varifolds with free boundary near the free boundary in any dimension and codimension. 
In section \ref{S:curvature-estimates}, we prove our main curvature estimates (Theorem \ref{T:main-curvature-estimates}) for stable free boundary minimal hypersurfaces near the free boundary. In section \ref{S:2d-curvature-estimates}, we prove 
the stronger curvature estimate (Theorem \ref{T:curvature estimates 2-D}) in the case of \emph{properly embedded} stable free boundary minimal surfaces in a Riemannian $3$-manifold with boundary. In section \ref{S:convergence}, we prove a general convergence result for free boundary minimal submanifolds (in any dimension and codimension) satisfying uniform bounds on area and the second fundamental form. Finally, in section \ref{S:lamination convergence}, we prove a lamination convergence result for free boundary minimal surfaces in a three-manifold with uniform bound only on the second fundamental form of the minimal surfaces.

\vspace{0.5em}
{\bf Acknowledgements}: 
The authors would like to thank Prof. Richard Schoen for his continuous encouragement. They also want to thank Prof. Shing Tung Yau, Prof. Tobias Colding and Prof. Bill Minicozzi for their interest in this work.
M. Li is partially supported by a research grant from the Research Grants Council of the Hong Kong Special Administrative Region, China [Project No.: CUHK 24305115] and CUHK Direct Grant [Project Code: 4053118].
X. Zhou is partially supported by NSF grant DMS-1406337.
The authors are grateful for the anonymous referee for valuable comments.

\section{Free Boundary Minimal Submanifolds}
\label{S:definitions}

In this section, we give the definition of \emph{free boundary minimal submanifolds} (Definition \ref{D:freeboundary}) and the notion of \emph{stability} (Definition \ref{D:stable}) in the hypersurface case. We also prove a reflection principle (Lemma \ref{L:reflection}) which will be useful in subsequent sections.

Let $(M,g)$ be an $m$-dimensional Riemannian manifold, and $N \subset M$ be an embedded $n$-dimensional constraint submanifold. We will always assume $M,N$ are smooth without boundary unless otherwise stated. Suppose $\Sigma$ is a $k$-dimensional smooth manifold with boundary $\partial \Sigma$ (possibly empty).

\begin{definition}
\label{D:immersion}
We use $(\Sigma,\partial \Sigma) \looparrowright (M,N)$ to denote an immersion $\varphi:\Sigma \to M$ such that $\varphi(\partial \Sigma) \subset N$. If, furthermore, $\varphi$ is an embedding, we denote it as $(\Sigma, \partial \Sigma) \hookrightarrow (M,N)$. 
An embedded submanifold $(\Sigma,\partial \Sigma) \subset (M,N)$ is said to be \emph{proper} if $\varphi(\Sigma) \cap N = \varphi(\partial \Sigma)$.
\end{definition}


\begin{definition}
\label{D:freeboundary}
We say that $(\Sigma,\partial \Sigma) \subset (M,N)$ is an immersed (resp. embedded) \emph{free boundary minimal submanifold} if
\begin{itemize}
\item[(i)] $\varphi: \Sigma \to M$ is a minimal immersion (resp. embedding), and
\item[(ii)] $\Sigma$ meets $N$ orthogonally along $\partial \Si$.
\end{itemize}
\end{definition}

\begin{remark}
Condition (ii), is often called the \emph{free boundary condition}. 
Note that both conditions (i) and (ii) are local properties.
\end{remark}

Free boundary minimal submanifolds can be characterized variationally as critical points to the $k$-dimensional area functional of $(M,g)$ among the class of all immersed $k$-submanifolds $(\Sigma,\partial \Sigma) \subset (M,N)$. Given a smooth $1$-parameter family of immersions $\varphi_t:(\Sigma,\partial \Sigma) \to (M,N)$, $t \in (-\epsilon,\epsilon)$, whose variation vector field $X(x)=\left. \frac{d}{dt}\right|_{t=0} \varphi_t(x)$ is compactly supported in $\Sigma$, the \emph{first variational formula} (c.f. \cite[\S 1.13]{CM11}) says that 
\begin{equation}
\label{first variation formula0}
\left. \frac{d}{dt}\right|_{t=0} \Area (\varphi_t(\Si)) = \int_{\Si} \Div_{\Si} X \; da 
= -\int_{\Si} X \cdot H \; da+\int_{\partial\Si} X \cdot \eta \; ds,
\end{equation}
where $H$ is the mean curvature vector of the immersion $\varphi_0:\Sigma \to M$ with outward unit conormal $\eta$, $da$ and $ds$ are the induced measures on $\Sigma$ and $\partial \Sigma$ respectively. 
Since $\varphi_t(\partial \Sigma) \subset N$ for all $t$, the variation vector field $X$ must be tangent to $N$ along $\partial \Sigma$. Therefore, $\varphi:(\Sigma,\partial \Sigma) \looparrowright (M,N)$ is a free boundary minimal submanifold if and only if (\ref{first variation formula0}) vanishes for all compactly supported variational vector field $X$ with $X(p) \in T_pN$ for all $p \in \partial \Sigma$, which is equivalent to conditions (i) and (ii) in Definition \ref{D:freeboundary}.

Since free boundary minimal submanifolds are critical points to the area functional, we can look at the second variation and study their stability. Roughly speaking, a free boundary minimal submanifold is said to be \emph{stable} if the second variation is non-negative. For simplicity and our purpose, we will only consider the \emph{hypersurface} case, i.e. $\dim \Sigma =\dim N =\dim M-1$.
Recall that an immersion $\varphi:\Sigma \to M$ is said to be \emph{two-sided} if there exists a globally defined continuous unit normal vector field $\nu$ on $\Sigma$.

\begin{definition}
\label{D:stable}
An immersed free boundary minimal hypersurface $\varphi:(\Sigma,\partial \Sigma) \looparrowright (M,N)$ is said to be \emph{stable} if it is two-sided and satisfies the stability inequality, i.e.
\begin{eqnarray}
\label{E:stability}
0 & \leq & \left. \frac{d^2}{dt^2}\right|_{t=0} \Area (\varphi_t(\Si))   \nonumber \\ 
                                           & =& \int_{\Si} |\nabla_{\Si} f|^2- (|A^{\Si}|^2+\Ric(\nu, \nu)) f^2 \; da -\int_{\partial\Si} A^N(\nu, \nu) f^2 \; ds, 
\end{eqnarray}
where $\varphi_t:(\Sigma,\partial \Sigma) \looparrowright (M,N)$ is any compactly supported variation of $\varphi_0=\varphi$ with variation field $X=f \nu$, 
$A^{\Si}$ and $A^N$ are the second fundamental forms of $\Si$ and $N$ in $M$ respectively, and $\Ric$ is the Ricci curvature of $M$.
\end{definition}

\begin{remark}
The sign convention of $A^N$ in (\ref{E:stability}) is taken such that $A^N \geq 0$ if $N=\partial \Omega$ is the boundary of a convex domain in $M$.
\end{remark}

One particularly important example is $M=\R^{n+1}$ and $N=\R^n=\{x_1=0\}$. Let $\R^{n+1}_+=\{x_1 \geq 0\}$ and $\theta:\R^{n+1} \to \R^{n+1}$ be the reflection map across $\R^n$. We have the following reflection principle that relates free boundary minimal hypersurfaces with minimal hypersurfaces without boundary.

\begin{lemma}[Reflection principle]
\label{L:reflection}
If $(\Sigma,\partial \Si) \looparrowright (\R^{n+1},\R^n)$ is an immersed stable free boundary minimal hypersurface, then $\Si \cup \theta(\Si)$ is an immersed stable minimal hypersurface (without boundary) in $\R^{n+1}$.
\end{lemma}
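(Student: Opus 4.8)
The plan is to show that the reflected object $\Sigma \cup \theta(\Sigma)$ is first a smooth minimal immersion across $\mathbb{R}^n$, and then verify the stability inequality for variations supported on the doubled hypersurface. The starting point is the free boundary condition: since $\Sigma$ meets $\mathbb{R}^n = \{x_1 = 0\}$ orthogonally along $\partial \Sigma$, the conormal $\eta$ along $\partial \Sigma$ is parallel to $\partial/\partial x_1$, so the reflection $\theta$ maps $\Sigma$ to a hypersurface that glues to $\Sigma$ along $\partial \Sigma$ with matching tangent planes. I would first argue the $C^1$ (hence, by elliptic regularity for the minimal surface system, $C^\infty$) gluing: parametrize $\Sigma$ near a boundary point $p \in \partial \Sigma$ as a graph over its tangent plane $T_p\Sigma$, which contains the $x_1$-axis; the free boundary condition forces the odd reflection of the graph function to agree with the graph of $\theta(\Sigma)$, and the minimal surface equation is preserved under $\theta$ since $\theta$ is a Euclidean isometry. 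Because $\Sigma$ meets the mirror orthogonally, the first derivatives match up at $x_1 = 0$ and the glued hypersurface solves the minimal surface equation weakly across $\partial\Sigma$, so it is a smooth minimal immersion $\hat\Sigma := \Sigma \cup \theta(\Sigma)$ without boundary. Two-sidedness of $\hat\Sigma$ follows because the globally defined unit normal $\nu$ on $\Sigma$ can be extended by $\theta_*\nu$ on $\theta(\Sigma)$, and orthogonality at the mirror makes this consistent.

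Next I would establish the stability of $\hat\Sigma$. Take any compactly supported $\hat f \in C_c^\infty(\hat\Sigma)$. The key reduction is to symmetrize: since $\theta$ is an isometry fixing the mirror and $\hat\Sigma$ is $\theta$-invariant, it suffices to check the second variation inequality on functions of the form $f + f\circ\theta$ and $f - f\circ\theta$ separately, or more simply to verify the quadratic form is nonnegative by splitting the integral over $\hat\Sigma$ into the two halves $\Sigma$ and $\theta(\Sigma)$. For the half-space setting $\Ric \equiv 0$ and $A^N \equiv 0$ (the mirror $\mathbb{R}^n$ is totally geodesic), so the stability inequality \eqref{E:stability} for $\Sigma$ reads $\int_\Sigma |\nabla f|^2 - |A^\Sigma|^2 f^2 \, da \geq 0$ for all $f \in C_c^\infty(\Sigma)$ — note there is no boundary term, so this is automatically the inequality for test functions not required to vanish on $\partial\Sigma$. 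Then for $\hat f \in C_c^\infty(\hat\Sigma)$, using $\theta$-invariance of $|A|^2$ and of the metric, the second-variation form on $\hat\Sigma$ equals the sum over the two halves, each of which is nonnegative by the free boundary stability of $\Sigma$ applied to $\hat f|_\Sigma$ and to $(\hat f\circ\theta)|_\Sigma$ respectively. Hence $\hat\Sigma$ is stable.

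The subtlety I would watch for — and what I expect to be the main obstacle — is the regularity of the gluing at $\partial\Sigma$: one must be sure that the orthogonal-intersection condition is exactly what is needed to make the odd reflection $C^1$ across the mirror, and then invoke that a $C^1$ (or Lipschitz, or $W^{1,2}$) weak solution of the minimal surface equation is automatically smooth. Care is also needed because $\Sigma$ is only immersed, not embedded, so the argument must be carried out in local coordinates around each boundary point of the domain manifold, and one must check that the immersion $\hat\Sigma$ is well-defined as an immersion of the doubled domain $\Sigma \cup_{\partial\Sigma} \Sigma$. A secondary point is that the test functions in the stability inequality \eqref{E:stability} for the free boundary problem are allowed to be nonzero on $\partial\Sigma$ — this is precisely why no boundary vanishing is needed when restricting $\hat f$ to each half, and it is essential that \eqref{E:stability} already incorporates the $A^N$ boundary term (which vanishes here) rather than being the Dirichlet-type stability inequality.
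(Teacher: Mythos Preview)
Your proposal is correct and follows the same approach as the paper's proof: the orthogonality condition gives a $C^1$ gluing across $\partial\Sigma$, elliptic regularity upgrades this to smoothness, and stability follows because the boundary term in \eqref{E:stability} vanishes when $N=\mathbb{R}^n$ is totally geodesic, so the free-boundary stability inequality on each half (applied to $\hat f|_\Sigma$ and $(\hat f\circ\theta)|_\Sigma$, which need not vanish on $\partial\Sigma$) sums to the stability inequality on $\hat\Sigma$. The paper's proof is a terse three-line version of exactly this; you have simply made the steps and subtleties explicit.
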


\begin{proof}
Since minimality is preserved under the isometry $\theta$ of $\R^{n+1}$ and that $\Sigma$ is orthogonal to $\R^n$ along $\partial \Sigma$, $\Sigma \cup \theta(\Si)$ is a $C^1$ minimal hypersurface in $\R^{n+1}$ without boundary. Higher regularity for minimal hypersurfaces implies that it is indeed smooth across $\partial \Sigma$. Stability follows directly from the definition since the boundary term in (\ref{E:stability}) vanishes for $N=\R^n$.
\end{proof}


\section{Monotonicity formula}
\label{S:monotonicity}


In this section, we prove a monotonicity formula (Theorem \ref{T:monotonicity}) for stationary varifolds with free boundary (c.f. Definition \ref{D:freebdy}) in Riemannian manifolds for any dimension and codimension. The monotonicity formula for free boundary minimal submanifolds is then a direct corollary.

Throughout this section, we will consider $M \subset \R^L$ as an embedded $m$-dimensional submanifold (by Nash isometric embedding theorem) and a compact closed $n$-dimensional constraint submanifold $N \subset M$. 
We will denote $\tB(p, r)$ to be the open Euclidean ball in $\R^L$ with center $p$ and radius $r>0$. The second fundamental form of $M$ in $\R^L$ is denoted by $A^M$. 

We begin with a discussion on the notion of \emph{stationary varifolds with free boundary}. Let $\V_k(M)$ denote the closure (with respect to the weak topology) of rectifiable \emph{$k$-varifolds} in $\mathbb{R}^L$ which is supported in $M$ (c.f. \cite[2.1(18)(g)]{P81}). As usual, the \emph{weight} of a varifold $V \in \V_k(M)$ is denoted by $\|V\|$. 
We refer the readers to the standard reference \cite{Si83} on varifolds.

We use $\mathfrak{X}(M,N)$ to denote the space of smooth vector fields $X$ compactly supported on $\mathbb{R}^L$ such that $X(x) \in T_xM$ for all $x \in M$ and $X(p)\in T_p N$ for all $p \in N$. Any such vector field $X \in \mathfrak{X} (M, N)$ generates a one-parameter family of diffeomorphisms $\phi_t: M \to M$ with $\phi_t(N)=N$ and the first variation of a varifold $V \in \V_k(M)$ along $X$ is defined by
\[ \delta V (X) := \left. \frac{d}{dt}\right|_{t=0} \|(\phi_t)_\sharp V\|(M),\]
where $(\phi_t)_\sharp V \in \V_k(M)$ is the pushforward of $V$ by the diffeomorphism $\phi_t$ (c.f. \cite[2.1(18)(h)]{P81}).

\begin{definition}
\label{D:freebdy}
 A $k$-varifold $V\in\V_k(M)$ is said to be \emph{stationary with free boundary on $N$} if $\delta V(X) =0$ for all $X \in \mathfrak{X}(M, N)$. 
\end{definition}

This generalizes the notion of free boundary minimal submanifolds to allow singularities. By the first variation formula for varifolds \cite[39.2]{Si83}, a $k$-varifold $V \in \V_k(M)$ is stationary with free boundary on $N$ if and only 
\begin{equation}
\label{E:stationary}
\int \Div_S X(x) \; dV(x, S)=0
\end{equation}
for all $X \in \mathfrak{X}(M,N)$. If $X$ is not tangent to $M$ but $X(p) \in T_pN$ for all $p \in N$, then (\ref{E:stationary}) implies that
\begin{equation}
\label{E:first-variation-X-varifold}
\int \Div_S X(x) \; dV(x,S) = \int X(x) \cdot \tr_S A^M \; dV(x,S),
\end{equation}
where $S\subset T_x M$ is an arbitrary $k$-plane, and $\tr_S A^M=\sum_{i=1}^k A^M(e_i, e_i)$ for an orthonormal basis $\{e_1, \cdots, e_k\}$ of $S$.

\vspace{0.5em}
The key idea to derive our monotonicity formula near a base point $p\in N$ is to find a special test vector field $X$ which is asymptotic (near $p$) to the radial vector field centered at $p$ and, at the same time, tangential along the constraint submanifold $N$. Our choice of $X$ is largely motivated by \cite{Al75, GJ86}, and we add the following preliminary results for completeness.

Let us review some local geometry of the $k$-dimensional compact closed constraint submanifold $N$ in $\R^L$ essentially following the discussions in \cite[\S 2]{Al75}.
We always identify a linear subspace $P \subset \R^L$ with its orthogonal projection $P \in \Hom(\R^L,\R^L)$ onto this subspace. Using this notion, we define the maps $\tau,\nu:N \to \Hom(\mathbb{R}^L,\mathbb{R}^L)$ to be
\[ \tau(p):=T_{p}N \quad \text{ and } \quad \nu(p):=(T_p N)^{\perp},\]
where $T_p N$ is the tangent space of $N$ in $\R^L$, and $(T_p N)^{\perp}$ is the orthogonal complement of $T_p N$ in $\R^L$. 

To bound the turning of $N$ inside $\mathbb{R}^L$,  we define as in \cite{Al75} a global geometric quantity
\[ \ka:=\inf \left\{ t \geq 0 : |\nu(x)(y-x)|\leq\frac{t}{2}\; |y-x|^2 \text{ for all $x, y \in N$} \right\}. \] 
By the compactness and smoothness of $N$, $\ka \in [0,\infty)$ and thus one can define the \emph{radius of curvature for $N$} to be
\begin{equation}
\label{D:R}
R_0:=\ka^{-1} \in (0,\infty].
\end{equation}
Let $\xi$ be the nearest point projection map onto $N$ and $\rho(\cdot):=\dist_{\mathbb{R}^L}(\cdot,N)$ be the distance function to $N$ \emph{in $\mathbb{R}^L$}, both defined on a tubular neighborhood of $N$. More precisely, if we define 
the open set 
\[ A:= \bigcup_{p \in N} \tB(p, R_0) \]
which is an open neighborhood of $N$ inside $\R^L$, we have the following lemma from \cite[Lemma 2.2]{Al75}.

\begin{lemma}
\label{L:local-geometry}
With the definitions as above, $\xi$, $\rho$, $\tau$, $\nu$ are well-defined and smooth on $A$. Moreover, we have the following estimates:
\begin{equation}
\label{E:nu-estimate}
\|D\nu_p (v)\|\leq \ka|v|, \quad \forall p\in N, v\in T_pN,
\end{equation}
\begin{equation}
\label{E:xi-estimate-a}
\|D\xi_a\|\leq \frac{1}{1-\ka\rho(a)},\quad \forall a\in A,
\end{equation}
\begin{equation}
\label{E:xi-estimate-b}
|\xi(a)-p|\leq \frac{|a-p|}{1-\ka |a-p|}, \quad \forall p \in N, a \in \tB(p, R_0).
\end{equation}
\end{lemma}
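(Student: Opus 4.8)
\textbf{Proof proposal for Lemma \ref{L:local-geometry}.}

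The plan is to establish each statement by working in a tubular neighborhood of $N$ and using the definition of $\ka$ together with standard facts about the nearest point projection. First I would show that $\xi$ and $\rho$ are well-defined and smooth on $A$. The key point is that for any $p \in N$ and any $a \in \tB(p,R_0)$, there is a unique nearest point of $N$ to $a$: if there were two nearest points $q_1, q_2$, then by the first-order condition the vector $a - q_i$ lies in $\nu(q_i)$, and the defining inequality for $\ka$ applied to the pair $q_1, q_2$ forces $|q_1 - q_2|$ to be small in a way that contradicts both being nearest at distance less than $R_0 = \ka^{-1}$; alternatively one invokes the standard tubular neighborhood theorem, noting that $\ka$ controls the second fundamental form $A^N$ of $N$ in $\R^L$ (take $y \to x$ along $N$ in the definition of $\ka$ to get $|A^N| \le \ka$), so the normal exponential map is a diffeomorphism on the set where the distance is less than the focal radius, which is at least $R_0$. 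Smoothness of $\xi$ then follows from the inverse function theorem applied to the normal exponential map, and $\rho(a) = |a - \xi(a)|$ is smooth where it is positive; on $N$ itself one works with $\tau = \xi^* (\text{Gauss map})$ type formulas. Smoothness of $\tau$ and $\nu$ on $A$ is then immediate by composing the smooth Gauss map $p \mapsto T_p N$ on $N$ with $\xi$.

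Next I would prove the three estimates. For \eqref{E:nu-estimate}: differentiating the projection-valued map $\nu$ along a curve in $N$, one has $\|D\nu_p(v)\| = \|D\tau_p(v)\|$, and this derivative is essentially the shape operator / second fundamental form of $N$ acting on $v$; the bound $|A^N| \le \ka$ obtained above gives $\|D\nu_p(v)\| \le \ka |v|$ directly. For \eqref{E:xi-estimate-b}: given $p \in N$ and $a \in \tB(p, R_0)$, write $q = \xi(a)$. Since $q$ is the nearest point, $|a - q| \le |a - p|$. Now $q - p = \tau(p)(q-p) + \nu(p)(q-p)$, and the definition of $\ka$ gives $|\nu(p)(q-p)| \le \frac{\ka}{2}|q - p|^2$. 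One also controls $|\tau(p)(q-p)|$: since $a - q \perp T_q N$ and the tangent planes at $p$ and $q$ are close (again by \eqref{E:nu-estimate}), a short computation bounds the tangential part, and combining these with the triangle inequality and $|a-q| \le |a-p|$ yields, after rearrangement, $|q - p|(1 - \ka|a-p|) \le |a - p|$, which is \eqref{E:xi-estimate-b}. For \eqref{E:xi-estimate-a}: write $a \in A$ with $\rho(a) = |a - \xi(a)|$, and differentiate the identity $a = \xi(a) + \rho(a) \nu$-component. Decomposing $D\xi_a$ using the fact that along the normal fiber $\xi$ is constant, the tangential part of $D\xi_a$ is $(\bI - \rho(a) S)^{-1}$ composed with a projection, where $S$ is the shape operator of $N$ at $\xi(a)$; since $\|S\| \le \ka$ and $\rho(a) < R_0 = \ka^{-1}$, the operator norm of $(\bI - \rho(a) S)^{-1}$ is at most $(1 - \ka \rho(a))^{-1}$, giving \eqref{E:xi-estimate-a}.

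The main obstacle I anticipate is the first part — verifying that $R_0 = \ka^{-1}$ is genuinely a lower bound for the radius of the tubular neighborhood on which $\xi$ and $\rho$ are smooth, i.e. that no focal points or double-nearest-point phenomena occur within distance $R_0$. This requires carefully extracting from the single global quantity $\ka$ both the curvature bound $|A^N| \le \ka$ and the "reach" estimate, and showing they are compatible with radius exactly $\ka^{-1}$; the cleanest route is to reduce everything to the inequality $|\nu(x)(y - x)| \le \frac{\ka}{2}|y-x|^2$ and argue directly, since this is exactly the hypothesis under which Allard proved the analogous statement. Consequently, rather than reprove these facts from scratch, I would cite \cite[Lemma 2.2]{Al75} (and \cite[\S 2]{Al75}) for the statement as given, and include only the short derivations above to make the constants and sign conventions transparent for the reader.
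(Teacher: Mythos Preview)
Your proposal is correct and ultimately takes the same approach as the paper: the paper's entire proof is the single line ``See \cite[Lemma 2.2]{Al75},'' which is exactly what you conclude at the end. Your additional sketches of the three estimates are reasonable and essentially accurate, but the paper includes none of this explanatory material and simply defers to Allard.
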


\begin{proof}
See \cite[Lemma 2.2]{Al75}.
\end{proof}

From now on, we fix a point $p \in N$. Without loss of generality, we can assume that $p=0$ after a translation in $\mathbb{R}^L$. 
By Lemma \ref{L:local-geometry}, we can define a smooth map $\zeta:  \tB(0, R_0) \to \mathbb{R}^L$ by 
\begin{equation}
\label{D:zeta}
\zeta(x):=-\nu\big(\xi(x)\big)\xi(x).
\end{equation}
Note that $-\zeta(x)$ is the normal component (with respect to $T_{\xi(x)}N$) of the vector $\xi(x)-p$ (which is equal to $\xi(x)$ when $p=0$). See Figure \ref{zeta}.

\begin{figure}[h]
\centering
\includegraphics[height=0.9in]{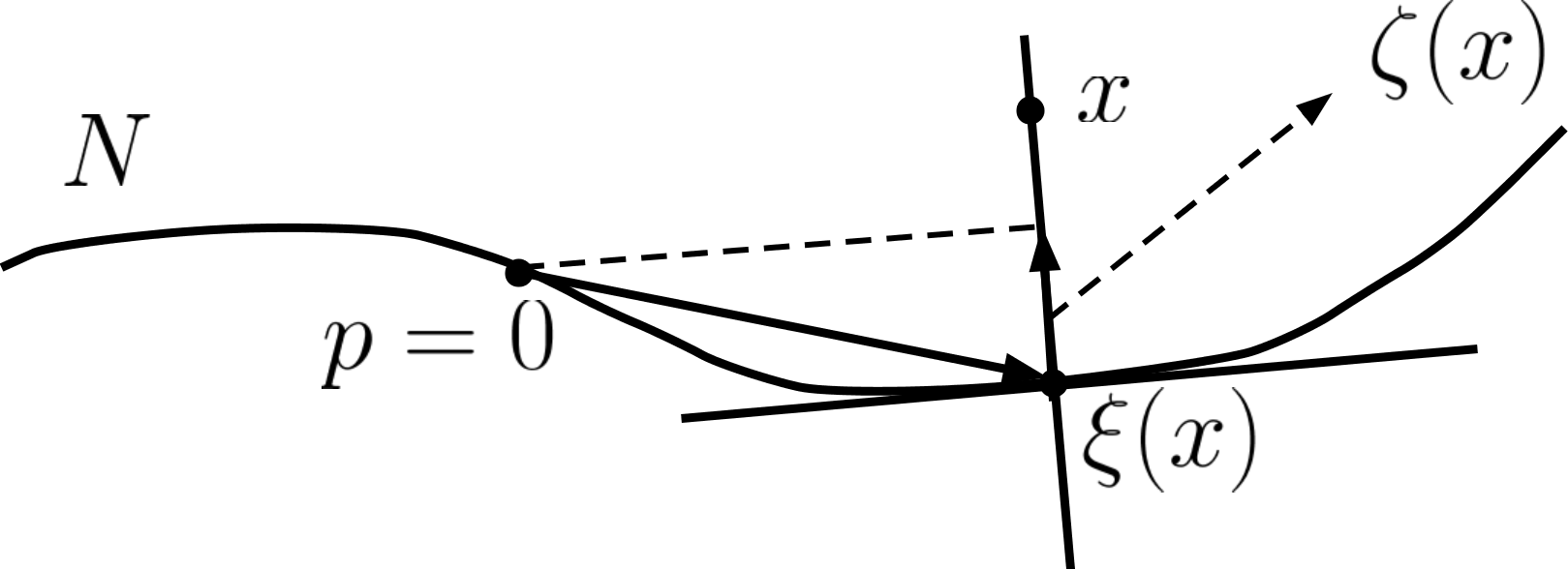}
\caption{Definition of $\zeta$}
\label{zeta}
\end{figure}

\begin{lemma}
\label{L:zeta}
Fix any $s \in (0, R_0)$, if we let $\ga=\frac{R_0}{2(R_0-s)^2}$, then
\begin{equation}
\label{E:zeta-estimate}
\|D\zeta_x\|\leq 2\ga |x| \quad \text{and} \quad |\zeta(x)|\leq \ga|x|^2 \quad \forall x \in  \tB(0, s).
\end{equation}
\end{lemma}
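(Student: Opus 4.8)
The plan is to establish both estimates in \eqref{E:zeta-estimate} by unwinding the definition $\zeta(x) = -\nu(\xi(x))\xi(x)$ and applying the estimates of Lemma \ref{L:local-geometry} together with the defining property of $\kappa$ (recall $R_0 = \kappa^{-1}$). Since $p = 0 \in N$, we have $\xi(0) = 0$, and the key observation is that $\zeta(x)$ is precisely the normal component (relative to $T_{\xi(x)}N$) of the vector $\xi(x) - \xi(0) = \xi(x) - 0$. By the definition of $\kappa$ applied to the two points $\xi(x), \xi(0) \in N$, we get $|\nu(\xi(x))(\xi(x) - \xi(0))| \le \tfrac{\kappa}{2}|\xi(x) - \xi(0)|^2$, i.e. $|\zeta(x)| \le \tfrac{\kappa}{2}|\xi(x)|^2$. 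Then I would control $|\xi(x)|$ via \eqref{E:xi-estimate-b}: for $x \in \tB(0,s)$ with $s < R_0$, we have $|\xi(x)| \le \tfrac{|x|}{1 - \kappa|x|} \le \tfrac{|x|}{1 - \kappa s} = \tfrac{R_0}{R_0 - s}|x|$. Squaring and combining gives $|\zeta(x)| \le \tfrac{\kappa}{2} \cdot \tfrac{R_0^2}{(R_0-s)^2}|x|^2 = \tfrac{R_0}{2(R_0 - s)^2}|x|^2 = \gamma |x|^2$, which is the second estimate.

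For the gradient bound, I would differentiate $\zeta(x) = -\nu(\xi(x))\xi(x)$ using the product/chain rule: for $v \in \R^L$,
\[
D\zeta_x(v) = -\big(D\nu_{\xi(x)}(D\xi_x(v))\big)\xi(x) - \nu(\xi(x))\big(D\xi_x(v)\big).
\]
The first term is estimated by $\|D\nu_{\xi(x)}\| \cdot \|D\xi_x\| \cdot |\xi(x)| \cdot |v|$; here \eqref{E:nu-estimate} gives $\|D\nu\| \le \kappa$, \eqref{E:xi-estimate-a} gives $\|D\xi_x\| \le (1 - \kappa\rho(x))^{-1}$, and $|\xi(x)|$ is bounded as above. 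Since $\rho(x) = \dist(x, N) \le |x - 0| = |x| \le s$ (as $0 \in N$), we have $\|D\xi_x\| \le (1 - \kappa s)^{-1} = \tfrac{R_0}{R_0 - s}$. For the second term, I need to observe that $\nu(\xi(x))$ composed with $D\xi_x$ is small: since $\xi$ maps into $N$, $D\xi_x(v) \in T_{\xi(x)}N$, so naively $\nu(\xi(x))(D\xi_x(v)) = 0$ — but $D\xi_x(v)$ is the differential evaluated at $x \notin N$ in general, and $D\xi_x$ does land in $T_{\xi(x)}N$ because $\xi$ takes values in $N$. So actually the second term vanishes identically, and one is left with
\[
\|D\zeta_x\| \le \kappa \cdot \frac{R_0}{R_0 - s} \cdot \frac{R_0}{R_0 - s}|x| = \frac{R_0^2}{(R_0-s)^2}\,\kappa\,|x| = \frac{R_0}{(R_0-s)^2}|x| = 2\gamma|x|,
\]
giving the first estimate.

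The main point requiring care — and what I expect to be the only genuine subtlety — is the claim that the second term $\nu(\xi(x))(D\xi_x(v))$ vanishes. This follows because $\xi: \tB(0,R_0) \to N$ has image in $N$, so its differential at any point $x$ maps $\R^L$ into $T_{\xi(x)}N$, on which the projection $\nu(\xi(x)) = (T_{\xi(x)}N)^\perp$ acts as zero. One should double-check the smoothness hypotheses of Lemma \ref{L:local-geometry} are in force: $\tB(0,s) \subset \tB(0,R_0) \subset A$, so $\xi$, $\rho$, $\tau$, $\nu$ are all smooth there, and the composition $\zeta$ is smooth on $\tB(0,R_0)$ as claimed. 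A secondary point is verifying the numerical identity $\gamma = \tfrac{R_0}{2(R_0-s)^2}$ matches the constants produced: $\tfrac{\kappa R_0^2}{(R_0-s)^2} = \tfrac{R_0}{(R_0-s)^2} = 2\gamma$ since $\kappa R_0 = 1$, and similarly $\tfrac{\kappa}{2}\cdot\tfrac{R_0^2}{(R_0-s)^2} = \gamma$. Everything else is routine bookkeeping with the three inequalities of Lemma \ref{L:local-geometry}.
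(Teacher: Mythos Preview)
Your proof is correct and essentially matches the paper's argument: the derivative computation, the vanishing of the term $\nu(\xi(x))D\xi_x(v)$ via $D\xi_x(v)\in T_{\xi(x)}N$, and the use of \eqref{E:nu-estimate}--\eqref{E:xi-estimate-b} are all exactly as in the paper. The only (minor) difference is in the second estimate: the paper obtains $|\zeta(x)|\le\gamma|x|^2$ by line-integrating the already-proved bound $\|D\zeta_x\|\le 2\gamma|x|$ from $0$ (using $\zeta(0)=0$), whereas you apply the defining inequality of $\kappa$ directly to the pair $\xi(x),\,0\in N$; both routes yield the same constant and are equally short.
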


\begin{proof}
Fix $s \in (0, R_0)$ and any $x \in \tB(0,s)$. As $D\xi_x(v)\in T_{\xi(x)} N$ for any $v\in\R^L$, we have $\nu(\xi(x)) D\xi_x(v)=0$ for any $v$, thus
\begin{displaymath}
\begin{split}
D\zeta_x(v) & =- [D\nu_{\xi(x)} \circ D\xi_x (v)] (\xi(x)) - \nu(\xi(x)) D\xi_x(v) \\
&= - [D\nu_{\xi(x)} \circ D\xi_x (v)] (\xi(x)).
\end{split}
\end{displaymath}
Therefore, we have by (\ref{E:nu-estimate}), (\ref{E:xi-estimate-a}), (\ref{E:xi-estimate-b}), $\rho(x) \leq |x|$ and $D\xi_x(v)\in T_{\xi(x)} N$,
\[ \|D\zeta_x\| \leq \ka \cdot \frac{1}{1-\ka\rho(x)}\cdot  \frac{|x|}{1-\ka|x|}\leq \frac{R_0}{(R_0-|x|)^2}|x| \leq 2 \ga |x|.\]
The estimate for $|\zeta(x)|$ follows from a line integration from $x=0$ using that $\zeta(0)=0$.
\end{proof}

We can now state our monotonicity formula.

\begin{theorem}[Monotonicity Formula]
\label{T:monotonicity}
Let $M$ be an embedded $m$-dimensional submanifold in $\R^L$ with second fundamental form $A^M$ bounded by some constant $\La>0$, i.e. $|A^M|\leq\La$. Suppose $N \subset M$ is a compact, closed, embedded $n$-dimensional submanifold, and $V\in\V_k(M)$ is a stationary $k$-varifold with free boundary on $N$.

For any $p \in N$ and $0 < \si < \rho < \frac{1}{2} R_0$ as defined in (\ref{D:R}), we have
\[ e^{\La_1\si} \frac{\|V\|(\tB(p, \si))}{\si^k}
                    \leq e^{\La_1\rho}\frac{\|V\|(\tB(p, \rho))}{\rho^k}-\int_{G_k(\tA(p, \si, \rho))}\frac{e^{\La_1 r}|\nabla_{S}^{\perp}r|^2}{(1+\ga r)r^k}dV(x, S). \]
Here $\ga=\frac{2}{R_0}$ is defined in Lemma \ref{L:zeta} (with $s=\frac{1}{2} R_0$), $\La_1:=k(\La+3\ga)$, $r(x):=|x-p|$, $\nabla_S^\perp r$ is the projection of $\nabla r$ to the orthogonal complement $S^\perp$ of the $k$-plane $S \subset \mathbb{R}^L$, and $G_k(\tA(p, \si, \rho)):=\tA(p, \si, \rho) \times G(L,k)$ is the restriction of the $k$-dimensional Grassmannian on $\mathbb{R}^L$ restricted to $\tA(p, \si, \rho):=\tB(p, \rho)\setminus \tB(p, \si)$.
\end{theorem}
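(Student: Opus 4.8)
The plan is to derive the monotonicity formula by testing the first variation identity \eqref{E:first-variation-X-varifold} against a carefully chosen radial-type vector field, following the classical Euclidean argument but correcting for (a) the ambient curvature of $M$ in $\R^L$ and (b) the constraint that the vector field must be tangent to $N$ along $N$. First I would fix $p=0\in N$ and, for a smooth cutoff profile $\gamma_h(r)$ approximating the indicator of $[0,\sigma]$ (the standard piecewise-linear $\min\{1,\sigma/r\}$-type weight, smoothed), set
\[
X(x):=\gamma_h\!\big(r(x)\big)\,\big(x-\zeta(x)\big),
\]
where $r(x)=|x|$ and $\zeta$ is the map from \eqref{D:zeta}. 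The point of subtracting $\zeta(x)$ is exactly Allard's trick: since $-\zeta(x)$ is the $T_{\xi(x)}N$-normal component of $\xi(x)$, the vector $x-\zeta(x)$ is tangent to $N$ at every point of $N$ (there $\xi(x)=x$, so $x-\zeta(x)=\tau(x)(x)\in T_xN$), hence $X\in\mathfrak X(M,N)$ up to multiplying by a spatial cutoff that kills it outside $\tB(0,\tfrac12 R_0)$; near $p$ it is $C^1$-close to the genuine radial field $x\partial_r$ because $|\zeta(x)|\le\gamma|x|^2$ and $\|D\zeta_x\|\le 2\gamma|x|$ by Lemma \ref{L:zeta}.

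The second step is to compute $\Div_S X(x)$ for an arbitrary $k$-plane $S$. Writing $X=\gamma_h(r)\,x-\gamma_h(r)\,\zeta(x)$ and differentiating, the main term produces, as in the Euclidean monotonicity computation, $\Div_S\big(\gamma_h(r)x\big)=k\gamma_h(r)+r\gamma_h'(r)\,|\nabla_S r|^2 = k\gamma_h(r)+r\gamma_h'(r)\big(1-|\nabla_S^\perp r|^2\big)$; the $\zeta$-contribution and the error from $X$ not being exactly radial are bounded in absolute value by $C\gamma\big(\gamma_h(r)+r|\gamma_h'(r)|\big)\cdot k r$ using \eqref{E:zeta-estimate}. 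Then I feed this into \eqref{E:first-variation-X-varifold}: the right-hand side $\int X\cdot\tr_S A^M\,dV$ is controlled by $\La\int |X|\,dV\le \La\int \gamma_h(r)\,r(1+\gamma r)\,dV$ (using $|x-\zeta(x)|\le r+\gamma r^2$), which is again of the same "$k\gamma_h + r\gamma_h'$" shape after integrating. Collecting terms, with $I(r):=\|V\|(\tB(0,r))$ and the defect integral $J$ built from $|\nabla_S^\perp r|^2$, one obtains a differential inequality of the form
\[
\frac{d}{dr}\!\left(\frac{I(r)}{r^k}\right) \;\ge\; -\La_1\,\frac{I(r)}{r^k} \;+\; \frac{1}{r^k}\frac{d}{dr}(\text{defect}),
\]
valid in the distributional sense in $r$, with $\La_1=k(\La+3\gamma)$ absorbing all the curvature and $\zeta$-errors (the constant $3\gamma$ is where the three sources — the $D\zeta$ bound $2\gamma$, the $|\zeta|$ bound contributing to $|X|$, and the cross term — get lumped).

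The third step is to multiply by the integrating factor $e^{\La_1 r}$ and integrate from $\sigma$ to $\rho$, then let $h\to 0$ so $\gamma_h\to\mathbf 1_{[0,\sigma]}$; the boundary terms give $e^{\La_1\sigma}I(\sigma)/\sigma^k$ and $e^{\La_1\rho}I(\rho)/\rho^k$, and the accumulated defect converges to $\int_{G_k(\tA(p,\sigma,\rho))} e^{\La_1 r}|\nabla_S^\perp r|^2\big((1+\gamma r)r^k\big)^{-1}\,dV$, yielding exactly the stated inequality. The main obstacle, and the place requiring genuine care rather than bookkeeping, is the second step: one must verify that $X\in\mathfrak X(M,N)$ honestly — in particular that $X$ being merely tangent to $N$ \emph{along} $N$ (not tangent to $M$ everywhere) is what forces the use of \eqref{E:first-variation-X-varifold} with the $\tr_S A^M$ term rather than \eqref{E:stationary}, and that all error terms are genuinely bounded by a \emph{constant} multiple of $k\gamma_h(r)+r|\gamma_h'(r)|$ uniformly in the plane $S$, so that they can be absorbed into $\La_1$ and into the same monotone structure; getting the sign of the defect term right (so it appears with a minus sign on the large-ball side) and checking that the $(1+\gamma r)$ denominator — coming from the lower bound $|x-\zeta(x)|\ge(1-\gamma r)r$ versus the reciprocal estimates — lands in the right place also needs to be done carefully.
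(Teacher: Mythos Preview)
Your approach is essentially identical to the paper's: test \eqref{E:first-variation-X-varifold} with a radial vector field corrected by $\zeta$ so as to be tangent to $N$, bound the $\zeta$-errors via Lemma~\ref{L:zeta} and the $A^M$-term via $|A^M|\le\La$, and rearrange into a differential inequality for $r^{-k}\|V\|(\tB(p,r))$ with integrating factor $e^{\La_1 r}$. Two small corrections: with the paper's sign convention $\zeta(x)=-\nu(\xi(x))\xi(x)$ you need $X=\varphi(r)\big(x+\zeta(x)\big)$, not $x-\zeta(x)$, to get $x+\zeta(x)=\tau(x)x\in T_xN$ on $N$; and the factor $(1+\ga r)$ in the denominator of the defect term does not come from a lower bound on $|x\pm\zeta(x)|$ but rather from the estimate $|\nabla^S r\cdot\zeta(x)|\le\ga r^2$, which makes the coefficient of $\varphi'(r)$ in the non-defect part equal to $r(1+\ga r)$, so that after dividing through one obtains $\frac{d}{d\rho}\big(e^{\La_1\rho}\rho^{-k}I(\rho)\big)\ge e^{\La_1\rho}\big((1+\ga\rho)\rho^k\big)^{-1}J'(\rho)$.
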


\begin{proof}
As before, we can assume $p=0$ by a translation in $\mathbb{R}^L$. The monotonicity formula will be obtained by choosing a suitable test vector field $X$ in (\ref{E:first-variation-X-varifold}). Define 
\[ X(x):=\varphi(r)\big(x+\zeta(x)\big), \]
where $r=|x|$ and $\varphi \geq 0$ is a smooth cutoff function with $\varphi^{\pr}\leq 0$, and $\varphi(r)=0$ for $r\geq \frac{1}{2} R_0$. When $x\in N$, we have $\xi(x)=x$ and thus 
\[ x+\zeta(x)=x-\nu(x)x=\tau(x) x \in T_xN.\]
Hence $X(x)\in T_xN$ for all $x \in N$, and (\ref{E:first-variation-X-varifold}) holds true for such $X$.

For any $k$-dimensional subspace $S \subset \mathbb{R}^L$, by the definition of $X$,
\begin{displaymath}
\begin{split}
\Div_S X(x) & =\varphi(r)\big(\Div_S x+\Div_S\zeta(x)\big)+\varphi^{\pr}(r)\nabla^S r\cdot\big(x+\zeta(x)\big)\\
                      & =\varphi(r)(k+\Div_S\zeta(x))+\varphi^{\pr}(r)\big[r(1-|\nabla_S^{\perp}r|^2)+\nabla^S r\cdot\zeta(x)\big].
\end{split}
\end{displaymath}
By (\ref{E:zeta-estimate}), we have the estimates
\[ |\Div_S\zeta(x)|\leq k \|D\zeta_x \| \leq 2k\ga r,\]
\[ |\nabla^S r\cdot\zeta(x)|\leq|\zeta(x)|\leq \ga r^2.\]
Using the fact that $\varphi\geq 0$ and $\varphi^{\pr}\leq 0$, we have the following estimates
$$\Div_S X(x)\geq \varphi(r)(k-2k\ga r)+\varphi^{\pr}(r)\big[r(1-|\nabla_S^{\perp}r|^2)+\ga r^2\big],$$
$$|X(x)|\leq\varphi(r)(|x|+|\zeta(x)|)\leq \varphi(r)(r+\ga r^2).$$
Plugging these estimates into (\ref{E:first-variation-X-varifold}) and using the bound $|A^M|\leq\La$,
\begin{displaymath}
\begin{split}
&\int\varphi^{\pr}(r)r(1+\ga r) d\|V\|  +k\int\varphi(r)d\|V\| \\
                                                    & \leq \int \varphi^{\pr}(r)r|\nabla_S^{\perp}r|^2 dV(x, S)+ k\La\int_{\Si}\varphi(r)r(1+\ga r)d\|V\|+2k\ga\int_{\Si}\varphi(r)r d\|V\|.
\end{split}
\end{displaymath}
Fix a smooth cutoff function $\phi:[0, \infty)\rightarrow[0, 1]$ such that $\phi^{\pr}\leq 0$ and $\phi(s)=0$ for $s\geq 1$. For any $\rho \in (0,\frac{1}{2}R_0)$, if we define $\varphi(r)=\phi(\frac{r}{\rho})$, then it is a cutoff function satisfying all the assumptions above. Moreover, $r\varphi^{\pr}(r)=-\rho\frac{d}{d\rho}\varphi(\frac{r}{\rho})$. Plugging into the inequality above, using the fact that $\phi(\frac{r}{\rho})=0$ for $r \geq \rho$,
\begin{displaymath}
\begin{split}
-\rho(& 1+  \ga\rho)  \frac{d}{d\rho}\int \phi \left(\frac{r}{\rho}\right) +k\int \phi  \left(\frac{r}{\rho}\right) \\
                          & \leq -\rho\frac{d}{d\rho}\int \phi  \left(\frac{r}{\rho}\right)|\nabla_S^{\perp}r|^2+ k\La\rho(1+\ga\rho)\int \phi  \left(\frac{r}{\rho}\right)+2k\ga\rho\int \phi  \left(\frac{r}{\rho}\right).
\end{split}
\end{displaymath}
Adding $k\ga\rho\int \phi(\frac{r}{\rho})$ to both sides of the inequality, we obtain
\begin{displaymath}
\begin{split}
-\rho(1+\ga\rho)  \frac{d}{d\rho} & \int \phi  \left(\frac{r}{\rho}\right) +k(1+\ga\rho)\int \phi  \left(\frac{r}{\rho}\right) \\
                          & \leq -\rho\frac{d}{d\rho}\int \phi  \left(\frac{r}{\rho}\right) |\nabla_S^{\perp}r|^2+ k \rho [ \La (1+\ga\rho)+3 \ga] \int \phi  \left(\frac{r}{\rho}\right).
\end{split}
\end{displaymath}
Denote $I(\rho)=\int \phi(\frac{r}{\rho})d\|V\|$ and $J(\rho)=\int \phi(\frac{r}{\rho})|\nabla_S^{\perp}r|^2 dV(x, S)$, then we have
$$(1+\ga\rho)\frac{d}{d\rho}\left( \frac{I(\rho)}{\rho^k}\right) \geq \frac{J^{\pr}(\rho)}{\rho^k}-k[\La(1+\ga\rho)+3\ga]\frac{I(\rho)}{\rho^k},$$
which clearly implies
$$\frac{d}{d\rho}\left( \frac{I(\rho)}{\rho^k}\right) + k(\La+3\ga) \frac{I(\rho)}{\rho^k} \geq \frac{J^{\pr}(\rho)}{(1+\ga \rho)\rho^k}.$$
Therefore, we can rewrite it into the form
$$\frac{d}{d\rho}\left( e^{k(\La+3\ga)\rho}\frac{I(\rho)}{\rho^k}\right) \geq \frac{e^{k(\La+3\ga)\rho}}{(1+\ga\rho)\rho^k}J^{\pr}(\rho).$$
The monotonicity formula follows by letting $\phi$ approach the characteristic function of $[0, 1]$.
\end{proof}

\section{Curvature estimates}
\label{S:curvature-estimates}

In this section, we prove our main curvature estimates (Theorem \ref{T:main-curvature-estimates}) which imply Theorem \ref{T:main}. The estimates hold for immersed stable free boundary minimal hypersurfaces in any closed Riemannian manifold $(M,g)$ with constraint hypersurface $N \subset M$. Moreover, the estimates are local and \emph{uniform} in the sense that the constants only depend on the geometry of $M$ and $N$, and the area of the minimal hypersurface. Throughout this section, we will assume that the $(n+1)$-dimensional closed Riemannian manifold $(M^{n+1},g)$ is isometrically embedded into $\R^L$ and $N \subset M$ is a compact embedded hypersurface in $M$ with $\partial N=\emptyset$.

Denote $B(p,r) \subset M$ as the open geodesic ball of $M$ centered at $p$ with radius $r>0$. Since the intrinsic distance on $M$ and the extrinsic distance on $\mathbb{R}^L$ are equivalent near a given point $p\in M$, we can WLOG assume that the monotonicity formula (Theorem \ref{T:monotonicity}) holds true for geodesic balls when the radius is less than some $R_0>0$ (depending only on $(M, N)$ and the embedding to $\R^L$). Now we can state our main curvature estimates near the boundary.

\begin{theorem}
\label{T:main-curvature-estimates}
Let $2 \leq n \leq 6$. 
Suppose $M^{n+1} \subset \R^L, N, R_0$ are given as above. 
Let $p \in N$ and $0<R<R_0$.
If $(\Si,\partial \Si) \looparrowright (B(p,R),N\cap B(p,R))$ is an immersed (embedded when $n=6$) stable free boundary minimal hypersurface satisfying the area bound: $\Area (\Si \cap B(p, R) )\leq C_0$,
then 
\begin{equation*}
\sup_{x \in \Si \cap B(p, \frac{R}{2})} |A^{\Si}| (x) \leq C_1,
\end{equation*}
where $C_1>0$ is a constant depending on $C_0$, $M$ and $N$.
\end{theorem}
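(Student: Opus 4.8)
The plan is to argue by contradiction using a blow-up (point-selection) argument, exactly paralleling the strategy sketched in the introduction, and reducing the free boundary situation to the interior Bernstein-type theorems of Schoen--Simon--Yau \cite{SSY75} (and Schoen--Simon \cite{SS81} when $n=6$) via the reflection principle. Suppose the estimate fails. Then there is a sequence $M_j^{n+1} \subset \R^L$, $N_j \subset M_j$ (all with geometry controlled by the fixed bounds, so we may as well assume, after passing to a subsequence and using compactness of the space of such embeddings, that $M_j \to M_\infty$, $N_j \to N_\infty$ smoothly), points $p_j \in N_j$, radii $R_j \in (0,R_0)$, area bounds $C_0$, and immersed stable free boundary minimal hypersurfaces $(\Si_j,\partial\Si_j) \looparrowright (B(p_j,R_j), N_j \cap B(p_j,R_j))$ with $\Area(\Si_j \cap B(p_j,R_j)) \le C_0$, but with
\[
\sup_{x \in \Si_j \cap B(p_j, R_j/2)} |A^{\Si_j}|(x) \to \infty.
\]

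Next I would run the standard point-selection (Ekeland-type) argument of Schoen--Simon--Yau adapted to the free boundary: choose $x_j \in \overline{\Si_j \cap B(p_j, 3R_j/4)}$ realizing a near-maximum of $|A^{\Si_j}|(x)\,\dist(x, \partial B(p_j,R_j))$, set $\lambda_j := |A^{\Si_j}|(x_j) \to \infty$, and rescale the ambient picture by $\lambda_j$ around $x_j$. Since $\lambda_j \to \infty$, the rescaled ambient manifolds $\lambda_j(M_j - x_j)$ converge smoothly to a Euclidean $\R^{n+1}$, and the rescaled constraint hypersurfaces $\lambda_j(N_j - x_j)$ converge either to a hyperplane $\R^n \subset \R^{n+1}$ (if $\dist(x_j, N_j)\lambda_j$ stays bounded) or recede to infinity (if $\dist(x_j,N_j)\lambda_j \to \infty$, in which case the limit has no boundary and we are in the purely interior case). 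In either case, the point-selection guarantees that on any fixed ball the rescaled second fundamental forms are uniformly bounded, so by the convergence theorem of section~\ref{S:convergence} (applied with the uniform area bound, which survives rescaling up to the relevant scale-invariant quantity) the rescaled surfaces converge smoothly, possibly with multiplicity, to a complete embedded (or immersed) minimal hypersurface $\Si_\infty$ in $\R^{n+1}$, which is either boundaryless or is a free boundary minimal hypersurface with boundary on the limiting hyperplane $\R^n$. The surface $\Si_\infty$ is nonflat because $|A^{\Si_\infty}|(0) = 1$ (the curvature at the selected point is normalized to $1$, and smooth convergence passes this to the limit). It is stable because the stability inequality \eqref{E:stability} is preserved under the rescaling and smooth convergence (the boundary term $\int_{\partial\Si} A^{N}(\nu,\nu) f^2$ scales like $\lambda_j^{-1}$ times a bounded quantity and hence drops out in the limit, leaving the clean stability inequality for a boundaryless or $\R^n$-free-boundary minimal hypersurface). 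The uniform area bound $\Area(\Si_j \cap B(p_j,R_j)) \le C_0$ combined with the monotonicity formula (Theorem~\ref{T:monotonicity}, which holds uniformly for geodesic balls at these scales) gives a uniform Euclidean-scale-invariant area ratio bound, which after rescaling yields polynomial (in fact Euclidean-volume-growth) area bounds for $\Si_\infty$; in particular $\Si_\infty$ has finite area on compact sets and is a genuine smooth stable minimal hypersurface.

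Finally, in the case $\Si_\infty$ has boundary on $\R^n$, apply the reflection principle (Lemma~\ref{L:reflection}) to double $\Si_\infty$ across $\R^n$, obtaining a complete immersed (embedded when $n=6$) stable minimal hypersurface $\widehat{\Si}_\infty := \Si_\infty \cup \theta(\Si_\infty)$ in $\R^{n+1}$ \emph{without boundary}, which is still nonflat and still has Euclidean area growth. In the case $\Si_\infty$ was already boundaryless, take $\widehat\Si_\infty = \Si_\infty$ directly. Now apply the Bernstein theorem \cite[Theorem 2]{SSY75} for $2 \le n \le 5$ (valid for immersed two-sided stable minimal hypersurfaces with Euclidean volume growth; when $n=2$ the two-sided hypothesis may be dropped using \cite{Ros06}) or \cite[Theorem 3]{SS81} when $n=6$ (for embedded stable minimal hypersurfaces with Euclidean volume growth) to conclude that $\widehat\Si_\infty$ is a hyperplane, contradicting nonflatness. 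This contradiction proves the theorem.

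The main obstacle I expect is the bookkeeping in the blow-up near the free boundary: one must carefully track the boundary point $x_j$ relative to $N_j$ (distinguishing the genuinely-interior blow-up from the boundary blow-up according to whether $\lambda_j \dist(x_j,N_j)$ stays bounded), verify that the point-selection argument still produces uniform curvature bounds on fixed balls when $x_j$ may itself lie on (or very near) $\partial\Si_j$, ensure the rescaled surfaces remain \emph{properly immersed} free boundary hypersurfaces (so that the convergence theorem of section~\ref{S:convergence} and the reflection principle apply), and check that the limit $\Si_\infty$ has no interior boundary points — which is why one works with the uniform monotonicity formula to control the measure near $\partial\Si_j$ and rule out curvature concentrating at boundary points in a way that the interior point-selection would miss. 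Making the stability inequality pass cleanly to the limit, with the boundary term vanishing at the correct rate, is a second point requiring care, but it reduces to the scaling behavior of $A^{N_j}$ and the cutoff functions $f$.
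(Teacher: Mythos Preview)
Your proposal is correct and follows essentially the same route as the paper: contradiction, point selection and blow-up, dichotomy on whether $\lambda_j\,\dist(x_j,N)$ stays bounded, Euclidean area growth via the monotonicity formula, convergence via Theorem~\ref{T:convergence}, and reduction to the Bernstein theorem through the reflection principle (Lemma~\ref{L:reflection}). The paper's only noteworthy differences are a two-stage point selection (first pick $x_i$ with $|A|(x_i)\to\infty$, set $r_i=|A|(x_i)^{-1/2}$, then maximize $|A|\cdot\dist$ inside $B(x_i,r_i)$, which forces the blow-up center to approach $N$) and an explicit chaining of \emph{interior} monotonicity (from scale $\lambda_i^{-1}r$ up to $d_i=\dist_M(y_i,N)$) with the \emph{boundary} monotonicity of Theorem~\ref{T:monotonicity} (from the nearest point $z_i\in N$ out to scale $R/2$) in the area-growth verification---precisely the bookkeeping you anticipate.
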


\begin{proof}
The proof is by a contradiction argument which will be divided into three steps. First, if the assertion is false, then we can carry out a blowup argument to obtain a limit after a suitable rescaling. Second, we show that if the limit satisfies certain area growth condition, it has to be a flat hyperplane which would give a contradiction to the choice of the blowup sequence. Finally, we check that the limit indeed satisfies the area growth condition using the monotonicity formula (Theorem \ref{T:monotonicity}). 

\vspace{0.5em}
{\bf Step 1:} \textit{The blow-up argument.} 

Suppose the assertion is false, then there exists a sequence $(\Si_i,\partial \Si_i) \subset (B(p,R), N\cap B(p,R))$ of immersed (embedded when $n=6$) stable free boundary minimal hypersurfaces such that
\begin{equation}
\label{E:area-bdd}
\Area (\Si_i \cap B(p, R)) \leq C_0,
\end{equation}
but as $i \to \infty$, we have
\[ \sup_{x \in \Si_i \cap B(p, \frac{R}{2})} |A^{\Si_i}|(x) \rightarrow \infty.\]
Therefore, we can pick a sequence of points $x_i \in \Si_i \cap B(p, \frac{R}{2})$ such that $|A^{\Si_i}|(x_i)\rightarrow\infty$. 
By compactness we can assume that $x_i \to x \in B(p,\frac{2R}{3})$. 
By Schoen-Simon-Yau interior curvature estimates \cite{SSY75} (or Schoen-Simon's curvature estimates \cite{SS81} when $n=6$), we must have $x\in N$, and moreover, the connected component of $\Si_i\cap B(p, R)$ that passes through $x_i$ must have a non-empty free boundary component lying on $N\cap B(p, R)$.
Define a sequence of positive numbers 
\[ r_i:=(|A^{\Si_i}|(x_i))^{-\frac{1}{2}},\]
then we have $r_i\rightarrow 0$ and $r_i \, |A^{\Si_i}|(x_i)\rightarrow \infty$ as $i \to \infty$.
Now, choose $y_i \in \Si_i \cap B(x_i, r_i)$ so that it achieves the maximum of
\begin{equation}
\label{E:y_i-a}
 \sup_{y \in \Si_i \cap B(x_i, r_i)}  |A^{\Si_i}|(y) \dist_M(y,\partial B(x_i,r_i)) .
\end{equation}
Let $r_i^{\pr}:=r_i-\dist_M(y_i, x_i)$. Note that $r'_i \to 0$ as $r'_i \leq r_i \to 0$. (See Figure \ref{blow up argument}) Moreover, the same point $y_i \in \Si_i \cap B(x_i, r_i)$ also achieves the maximum of
\begin{equation}
\label{E:y_i-b}
\sup_{y \in \Si_i \cap B(y_i, r'_i)}  |A^{\Si_i}|(y) \dist_M(y,\partial B(y_i,r'_i)).
\end{equation}

\begin{figure}[h]
\centering
\includegraphics[height=1.6in]{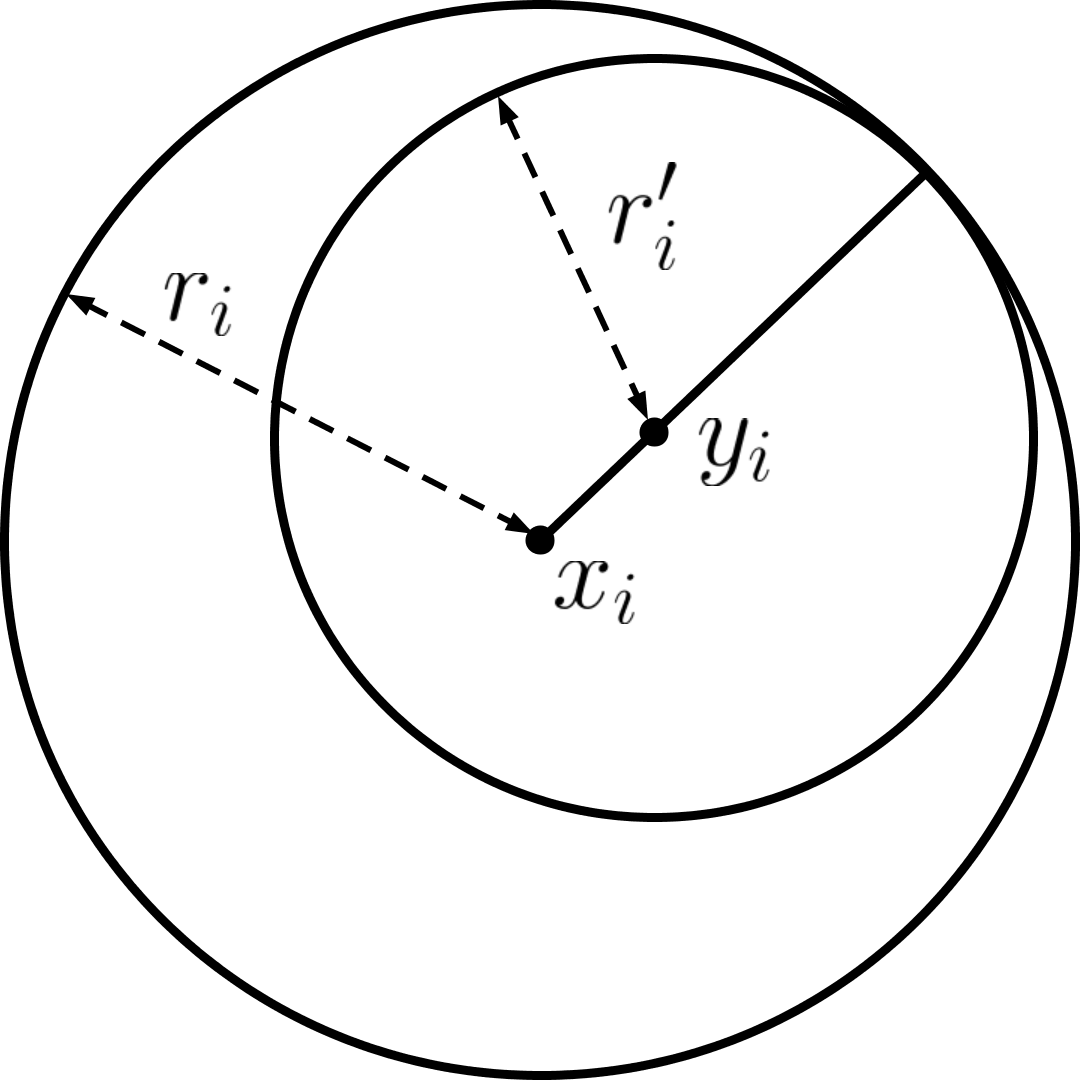}
\caption{$B(x_i, r_i)$ and $B(y_i, r_i^{\pr})$}
\label{blow up argument}
\end{figure}

Define $\la_i:=|A^{\Si_i}|(y_i)$, then we have $\la_i \to \infty$ since $r'_i \to 0$ and
\[
\begin{array}{rcl}
\la_i r_i^{\pr} & = &|A^{\Si_i}|(y_i)\dist_M (y_i, \partial B(y_i, r_i^{\pr}))\\
&=& |A^{\Si_i}|(y_i)\dist_M (y_i, \partial B(x_i, r_i))\\
                        & \geq& |A^{\Si_i}|(x_i)\dist_M(x_i, \partial B(x_i, r_i)) \\
                        &=& r_i |A^{\Si_i}|(x_i) \to +\infty,
\end{array}
\]
where the inequality above follows from (\ref{E:y_i-a}).

Let $\eta_i: \R^L \to \R^L$ be the blow up maps $\eta_i(z):=\la_i (z-y_i)$ centered at $y_i$. Denote $(M'_i,N'_i):=(\eta_i(M),\eta_i(N))$ and $B'(0,r)$ be the open geodesic ball in $M'_i$ of radius $r>0$ centered at $0 \in M'_i$. We get a blow-up sequence of immersed stable free boundary minimal hypersurfaces
\[ (\Si_i^{\pr},\partial \Si'_i) := (\eta_i(\Si_i),\eta_i(\partial \Si_i)) \subset (B'(0,\la_iR),N' \cap B'(0,\la_i R)).\]
Note that we have $|A^{\Si_i^{\pr}}|(0)=\la_i^{-1}|A^{\Si_i}|(y_i)=1$ for every $i$, and 
the connected component of $\Si_i'$ passing through $0$ must have non-empty free boundary lying on $N_i' \cap B'(0,\la_i R)$. 
For each fixed $r>0$, we have $\la_i^{-1} r < r_i'$ for all $i$ sufficiently large since $\la_i r'_i \to +\infty$. Hence, if $x \in \Si'_i \cap B'(0,r)$, then $\eta_i^{-1}(x) \in \Si_i \cap B(y_i,\la_i^{-1}r) \subset  \Si_i \cap B(y_i,r_i')$. Using (\ref{E:y_i-b}), we have
\begin{equation}
\label{E:A-x}
|A^{\Si_i^{\pr}}|(x)\leq\frac{\la_i r_i^{\pr}}{\la_i r_i^{\pr}-r}
\end{equation}
since $\dist_M(\eta_i^{-1}(x),\partial B(y_i,r'_i)) \geq r'_i - \la_i^{-1}r$ for all $i$ sufficiently large (depending on the fixed $r>0$). Note that the right hand side of (\ref{E:A-x}) approaches $1$ as $i \to \infty$.

\vspace{0.5em}
{\bf Step 2:} \textit{The contradiction argument.}

By the smoothness of $M$ and that $y_i \to x \in M$, we clearly have $B'(0, \la_i r_i^{\pr})$ converging to $T_xM$ smoothly and locally uniformly in $\R^L$. However, as $y_i$ does not necessarily lie on $N$, we have to consider two types of convergence scenario:
\begin{itemize}
\item Type I: $\liminf_{i \to \infty} \la_i \dist_{\R^L}(y_i,N) = \infty$,
\item Type II: $\liminf_{i \to \infty} \la_i \dist_{\R^L}(y_i,N) < \infty$.
\end{itemize}
For Type I convergence, the rescaled constraint surface $N' \cap B'(0,\la_i R)$ will escape to infinity as $i \to \infty$ and therefore disappear in the limit. For Type II convergence, after passing to a subsequence, $N' \cap B'(0,\la_i R) \to P$ smoothly and locally uniformly to some $n$-dimensional affine subspace $P \subset \R^L$.

Assume for now that the blow-ups $\Si_i^{\pr}$ satisfy a \emph{uniform Euclidean area growth} with respect to the geodesic balls in $M_i$, i.e., there exists a uniform constant $C_2>0$ such that for each fixed $r>0$, when $i$ is sufficiently large (depending possibly on $r$), we have
\begin{equation}
\label{E:Euclidean-area-growth}
\Area\big(\Si_i^{\pr}\cap B'(0, r)\big)\leq C_2 \, r^n.
\end{equation}
Using either the classical convergence theorem for minimal submanifolds with bounded curvature (for Type I convergence) or Theorem \ref{T:convergence} (for Type II convergence), there exists a 
subsequence of the 
connected component of $\Si_i^{\pr}$ passing through $0$ converging smoothly and locally uniformly to either
\begin{itemize}
\item a complete, immersed stable minimal hypersurface $\Si_{\infty}^1$ in $T_xM$, or
\item a non-compact, immersed stable free boundary minimal hypersurface $(\Si_{\infty}^2,\partial \Si_\infty^2) \subset (T_xM,P)$ such that $\partial\Si_\infty^2 \neq \emptyset$,
\end{itemize}
satisfying the same Euclidean area growth as in (\ref{E:Euclidean-area-growth}) \emph{for all} $r>0$ with $\Si'_i$ replaced by $\Si^1_\infty$ or $\Si^2_\infty$. When $n=6$, $\Si^1_\infty, \Si^2_\infty$ are both embedded by our assumption. In the first case, the classical Bernstein Theorem \cite[Theorem 2]{SSY75} (when $2\leq n\leq 5$) or \cite[Theorem 3]{SS81} (when $n=6$) implies that $\Si^1_\infty$ is a flat hyperplane in $T_xM$, which is a contradiction as $A^{\Si_{\infty}^1}(0)=1$. 
In the second case, as the constraint hypersurface $P$ is a hyperplane in $T_xM$, we can double $\Si_\infty^2$ as in Lemma \ref{L:reflection} by reflecting across $P$ to obtain a complete, immersed (embedded when $n=6$) stable minimal hypersurface in $T_xM$ with Euclidean area growth. 
This gives the same contradiction as in the first case.

\vspace{0.5em}
{\bf Step 3:} \textit{The area growth condition.}

It remains now to establish the uniform Euclidean area growth for $\Si'_i$ in (\ref{E:Euclidean-area-growth}). This is essentially a consequence of the monotonicity formula (Theorem \ref{T:monotonicity}). In the following, $C_3, C_4, \cdots$ will be used to denote constants depending only on $(M\subset \R^L, N)$.

Let $d_i:=\dist_M (y_i, N)$ and $z_i \in N$ be the nearest point projection (in $M$) of $y_i$ to $N$. Hence $d_i\to 0$ by the choice of $y_i$. We have to consider two cases:
\begin{itemize}
\item \textit{Case 1:} $\liminf_{i \to \infty} \la_i d_i = \infty$,
\item \textit{Case 2:} $\liminf_{i \to \infty} \la_i d_i < \infty$.
\end{itemize}

Let us first consider \textit{Case 1}. 
Fix $r>0$. Since $\la_i d_i \to \infty$, we have for all $i$ sufficiently large (depending on $r$)
\begin{equation}
\label{E:balls}
B(y_i, \la_i^{-1} r) \subset B(y_i, d_i) \subset B(z_i, 2d_i) \subset B(z_i, \frac{R}{2}) \subset B(p, R).
\end{equation}
Note that $B(y_i,d_i) \cap N=\emptyset$, by the interior monotonicity formula \cite[Theorem 17.6]{Si83} and (\ref{E:balls}), we have for $i$ sufficiently large
\[ \Area (\Si_i\cap B(y_i, \la_i^{-1} r))  \leq C_3 \frac{\Area (\Si_i\cap B(y_i, d_i))}{d_i^n}(\la_i^{-1}r)^n. \]
Using $d_i \to 0$, (\ref{E:balls}) and the boundary monotonicity formula (Theorem \ref{T:monotonicity}), we have for $i$ sufficiently large
\[ \Area (\Si_i\cap B(y_i, \la_i^{-1} r))  \leq 2^{n} C_4 \frac{\Area \big(\Si_i\cap B(z_i, \frac{R}{2})\big)}{(\frac{R}{2})^n}(\la_i^{-1}r)^n. \]
Finally, using (\ref{E:balls}) and (\ref{E:area-bdd}), for $i$ sufficiently large we have
\[ \Area (\Si_i\cap B(y_i, \la_i^{-1} r))  \leq \big(2^{2n}\, C_4 \, C_0 \, R^{-n}\big)\cdot (\la_i^{-1}r)^n,\]
which implies (\ref{E:Euclidean-area-growth}). This finishes the proof for \textit{Case 1}.

Now we consider \textit{Case 2}, i.e. $\la_i d_i$ is uniformly bounded for all $i$. 
By similar argument as above, we have
\[ B(y_i,\la_i^{-1}r) \subset B(z_i, d_i+\la_i^{-1}r) \subset  B(z_i, \frac{R}{2}) \subset B(p,R)\]
for all $i$ sufficiently large (for any fixed $r>0$). By exactly the same arguments as in \textit{Case 1}, we have
\[\Area  (\Si_i\cap B(y_i, \la_i^{-1} r))  \leq C_0 2^n C_5 R^{-n} \left(1+\frac{\la_i d_i}{r}\right)^n \cdot (\la_i^{-1} r)^n. \]
Since $\la_i d_i$ is uniformly bounded, for $r$ sufficiently large independent of $i$, (\ref{E:Euclidean-area-growth}) is satisfied. This proves \textit{Case 2} and thus completes the proof of Theorem \ref{T:main-curvature-estimates}.

\end{proof}


\section{Proof of Theorem \ref{T:curvature estimates 2-D}}
\label{S:2d-curvature-estimates}

In this section, we prove Theorem \ref{T:curvature estimates 2-D} using the same blow-up arguments as in the proof of Theorem \ref{T:main-curvature-estimates}. However, since we do not assume a uniform area bound of the minimal surfaces, we may not get a single stable minimal surface in the blow-up limit. Nonetheless, with the extra \emph{embeddedness} assumption, the blow-up sequence would still subsequentially converge to a \emph{minimal lamination}. 
Roughly speaking, a minimal lamination in a 3-manifold $M^3$ is a disjoint collection $\mc{L}$ of embedded minimal surfaces $\Lambda$ (called the \emph{leaves} of the lamination) such that $\cup_{\Lambda\in \mc{L}} \Lambda$ is a closed subset of $M$. In \cite{CM13}, Colding and Minicozzi proved that a sequence of minimal laminations with uniformly bounded curvature subsequentially converges to a limit minimal lamination. For our purpose, we will generalize the notion of minimal laminations to include the case with free boundary. 

Throughout this section, we will denote $M^3$ to be a compact $3$-manifold with boundary $\partial M$, and without loss of generality, suppose that $M$ is a compact subdomain of another closed Riemannian $3$-manifold $\wti{M}$. Moreover, we denote the half-space
\[\mathbb{R}_{+}^3:=\{(x^1,x^2,x^3)\in \mathbb{R}^3: x^1\geq 0\},\] 
whose boundary is given by the plane $\mathbb{R}^2_1=\partial \mathbb{R}^3_+=\{x^1= 0\}$. First, let us recall the definition of minimal lamination from \cite{CM13}. 

\begin{definition}[Appendix B in \cite{CM13}]
\label{D:min-lamination}
Let $\Om\subset\wti{M}$ be an open subset. A \emph{minimal lamination of $\Om$} is a collection  $\mc{L}$ of disjoint, embedded, connected minimal surfaces, denoted by $\Lambda$ (called the \emph{leaves} of the lamination) such that $\cup_{\Lambda\in \mc{L}} \Lambda$ is a closed subset of $\Om$. Moreover
\begin{itemize}
\item for each 
 $x \in \Om$, there exists a neighborhood $U$ of $x$ in $\Om$ and a local chart $(U,\Phi)$ with $\Phi(U)\subset \mathbb{R}^3$ so that in these coordinates the leaves in $\mc{L}$ pass through $\Phi(U)$ in slices of the form $(\mathbb{R}^2\times \{t\})\cap \Phi(U)$.
 \end{itemize}
\end{definition}

Now we can define minimal laminations with free boundary. 

\begin{definition}
\label{D:min-lamination-free}
A \emph{minimal lamination of $M^3$ with free boundary on $\partial M$} is a collection  $\mc{L}$ of disjoint, embedded, connected minimal surfaces with (possibly empty) free boundary on $\partial M$, denoted by $\Lambda$, such that $\cup_{\Lambda\in \mc{L}} \Lambda$ is a closed subset of $M$. Moreover, for each $x \in M$, one of the following holds:
\begin{itemize}
\item[(i)] 
$x \in M \setminus \partial M$ and
there exists an open neighborhood $U$ of $x$ in $M\setminus\partial M$ such that $\{\Lambda \cap U : \Lambda \in \mc{L}\}$ is a minimal lamination of $U$;
\item[(ii)] 
$x\in\partial M$ and there exists a relatively open neighborhood $\wti{U}$ of $x$ in $M$ and a local coordinate chart $(\wti{U}, \wti{\Phi})$ such that $\wti{\Phi}(\wti{U})\subset \mathbb{R}^3_+$ and $\wti{\Phi}(\partial M \cap\wti{U})\subset \partial\mathbb{R}^3_+$ so that in these coordinates the leaves in $\mc{L}$ pass through the chart in slices of the form $(\mathbb{R}^2\times \{t\})\cap \wti{\Phi}(\wti{U})$;
\item[(iii)] $x\in\partial M$ and there exists an open neighborhood $U$ of $x$ in $\wti{M}$, such that $\{\Lambda \cap U : \Lambda \in \mc{L}\}$ is a minimal lamination of $U$.
\end{itemize}
\end{definition}

\begin{remark}
Note that the leaves $\La$ of $\mc{L}$ in Definition \ref{D:min-lamination-free} may not be properly embedded in $M$. For example, $\La$ may touch $\partial M$ in the interior of $\La$ in case (iii).
\end{remark}

In the special case that $M^3=\R_+^3$, by the maximum principle \cite[Corollary 1.28]{CM11} we know that all leaves of $\mc{L}$ are properly embedded (except when $\La=\partial \R_+^3$). Therefore Lemma \ref{L:reflection} implies the following reflection principle for minimal lamination with free boundary.
\begin{lemma}[Lamination reflection principle]
\label{L:lamination reflection}
If $\mc{L}$ is a minimal lamination of  $\R_+^3$ with free boundary on $\partial \R_+^3$, then $\{\Lambda \cup \theta(\Lambda): \Lambda \in \mc{L}\}$ is a minimal lamination of $\R^3$ (in the sense of Definition \ref{D:min-lamination}).
\end{lemma}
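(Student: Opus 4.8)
The plan is to reduce the lamination statement to the pointwise reflection principle (Lemma \ref{L:reflection}) which has already been established. First I would recall that in the special case $M^3=\mathbb{R}^3_+$ every leaf $\Lambda$ of a minimal lamination with free boundary on $\partial\mathbb{R}^3_+$ is in fact \emph{properly} embedded, except for the leaf $\Lambda=\partial\mathbb{R}^3_+$ itself: this is the observation quoted just before the statement, and it follows from the maximum principle \cite[Corollary 1.28]{CM11}, since a leaf which touched $\partial\mathbb{R}^3_+$ in its interior without being equal to it would violate the boundary maximum principle against the minimal plane $\partial\mathbb{R}^3_+$. Given properness, each leaf $\Lambda$ meets $\partial\mathbb{R}^3_+$ orthogonally along $\partial\Lambda$ (the free boundary condition of Definition \ref{D:freeboundary}), so Lemma \ref{L:reflection}, applied to each connected leaf (discarding the stability clause, which plays no role here since we only need the minimality and smoothness conclusions), tells us that $\Lambda\cup\theta(\Lambda)$ is a smooth minimal surface without boundary in $\mathbb{R}^3$. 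For the exceptional leaf $\partial\mathbb{R}^3_+$ we simply note that $\theta$ fixes it, so $\Lambda\cup\theta(\Lambda)=\Lambda$ is already the minimal plane $\{x^1=0\}$ in $\mathbb{R}^3$.

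Next I would verify the three structural requirements of a minimal lamination in the sense of Definition \ref{D:min-lamination} for the family $\mathcal{L}':=\{\Lambda\cup\theta(\Lambda):\Lambda\in\mathcal{L}\}$. \emph{Disjointness}: if $\Lambda_1\neq\Lambda_2$ are leaves of $\mathcal{L}$, they are disjoint by hypothesis; applying the isometry $\theta$ preserves disjointness, and $\Lambda_1\cap\theta(\Lambda_2)=\emptyset$ because any intersection point would lie in $\mathbb{R}^3_+\cap\theta(\mathbb{R}^3_+)=\partial\mathbb{R}^3_+$, forcing both $\Lambda_1$ and $\Lambda_2$ to meet $\partial\mathbb{R}^3_+$ at that common free-boundary point, contradicting disjointness of $\partial\Lambda_1$ and $\partial\Lambda_2$ in $\partial\mathbb{R}^3_+$ (here one uses that distinct leaves have disjoint closures, a standard feature of laminations, which one should state). \emph{Closedness}: $\bigcup_{\Lambda\in\mathcal{L}}\Lambda$ is closed in $\mathbb{R}^3_+$, hence $\theta\big(\bigcup\Lambda\big)$ is closed in $\theta(\mathbb{R}^3_+)$, and their union is closed in $\mathbb{R}^3_+\cup\theta(\mathbb{R}^3_+)=\mathbb{R}^3$. \emph{Local product structure}: for $x$ in the open half-space $\{x^1>0\}$ (or its reflection) the chart comes directly from Definition \ref{D:min-lamination-free}(i) applied to $\mathcal{L}$ (resp. its reflection). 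The only nontrivial point is $x\in\partial\mathbb{R}^3_+$, where Definition \ref{D:min-lamination-free}(ii) or (iii) applies upstairs; in either case one has a chart $\widetilde{\Phi}$ on a relatively open $\widetilde{U}$ with the leaves appearing as slices $(\mathbb{R}^2\times\{t\})\cap\widetilde{\Phi}(\widetilde{U})$ and $\partial\mathbb{R}^3_+$ mapped into the slice $t=0$ — and one checks that extending $\widetilde{\Phi}$ by the reflection $\theta$ on the other side produces a genuine chart on a full (two-sided) neighborhood $U\subset\mathbb{R}^3$ in which $\Lambda\cup\theta(\Lambda)$ becomes the slice $(\mathbb{R}^2\times\{t\})\cup(\mathbb{R}^2\times\{-t\})$, which after a rescaling of the transverse coordinate is of the required slice form. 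Case (iii) is even simpler because there the leaves already extend smoothly past $\partial M$.

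The main obstacle, and the step deserving the most care, is precisely this last local-chart verification at boundary points: one must be sure that the reflected chart is $C^1$ (indeed smooth) across $\{x^1=0\}$ rather than merely continuous, and that the leaf-slices glue consistently. This is handled exactly as in the proof of Lemma \ref{L:reflection} — the $C^1$ matching at the free boundary comes from the orthogonality condition, and elliptic regularity for the minimal surface equation upgrades $C^1$ to smooth across the fixed plane — applied now uniformly to the lamination chart rather than to a single surface; one also needs to know the chart map $\widetilde\Phi$ itself (which is the ambient coordinate chart, not the leaves) can be taken to respect the reflection, which is automatic since we may choose Fermi-type coordinates adapted to $\partial\mathbb{R}^3_+$. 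Everything else is a routine bookkeeping check that isometries preserve minimality (already noted in Lemma \ref{L:reflection}), disjointness, and closedness.
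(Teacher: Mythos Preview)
Your overall approach is exactly the paper's: observe via the maximum principle that every leaf is properly embedded (except possibly $\partial\R^3_+$ itself), then apply Lemma~\ref{L:reflection} leaf by leaf. The paper's proof is essentially that one sentence and does not verify the lamination axioms at all, so your disjointness and closedness checks are welcome extra detail.

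Your local-chart verification at a boundary point, however, contains a slip. In Definition~\ref{D:min-lamination-free}(ii) the chart $\widetilde\Phi$ sends $\partial M$ into $\partial\R^3_+=\{x^1=0\}$, whereas the leaves appear as slices $\R^2\times\{t\}=\{x^3=t\}$; these two planes are \emph{perpendicular} in the target, not coincident. Hence extending $\widetilde\Phi$ by the reflection across $\{x^1=0\}$ keeps each slice at the \emph{same} height $t$: the doubled leaf $\Lambda\cup\theta(\Lambda)$ appears as the single slice $\{x^3=t\}$ in the extended chart, not as the pair $\{x^3=\pm t\}$. Your ``rescaling of the transverse coordinate'' is therefore unnecessary, and in any case could not merge two disjoint slices into one connected leaf. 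With this correction the chart check is immediate and no regularity issue for the extended chart map arises.
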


We will need the following convergence result. The proof will be postponed until section \ref{S:lamination convergence}. 

\begin{theorem}
\label{T:2d-lamination convergence}
Let $(M^3,g)$ be a compact Riemannian $3$-manifold with boundary $\partial M \neq \emptyset$. If $\mc{L}_i$ is a sequence of minimal laminations of $M$ with free boundary on $\partial M$ of uniformly bounded curvature, i.e. there exists a constant $C>0$ such that
\[ \sup\{|A^{\La}|^2(x): x\in\La\in\mc{L}_i\}\leq C, \] 
then a subsequence of $\mc{L}_i$ converges in the $C^\alpha$ topology for any $\alpha<1$ to a Lipschitz lamination $\mc{L}$ with minimal leaves in $M$ and free boundary on $\partial M$.
\end{theorem}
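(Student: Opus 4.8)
The plan is to reduce the free boundary lamination convergence to the closed case treated by Colding--Minicozzi in \cite{CM13} via a doubling (reflection) argument, handling the three local models in Definition \ref{D:min-lamination-free} separately. First I would fix a finite cover of the compact manifold $M$ by coordinate neighborhoods of three types: interior balls (where each $\mc{L}_i$ is an honest minimal lamination in the sense of Definition \ref{D:min-lamination}), boundary neighborhoods $\wti U$ of type (ii), and boundary neighborhoods $U \subset \wti M$ of type (iii). On the interior charts and the type-(iii) charts, the uniform curvature bound $|A^\La|^2 \leq C$ lets me apply the compactness theorem for minimal laminations with bounded curvature from \cite[Appendix B]{CM13} directly: a subsequence converges in $C^\alpha$ (indeed in $C^{1,\alpha}$ on the leaves, weaker on the sheet structure) to a Lipschitz minimal lamination. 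The content is really in the type-(ii) charts near the free boundary.

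For a type-(ii) chart, I would use the free boundary condition to double. Concretely, after extending the metric $g$ smoothly past $\partial M$ into $\wti M$ and using Fermi coordinates based on $\partial M$, the reflection $\theta$ across $\partial M$ is an isometry of the doubled metric to second order along $\partial M$; each leaf $\La$ meeting $\partial M$ orthogonally reflects to $\La \cup \theta(\La)$, which is a $C^{1,1}$ (in fact smooth, by elliptic regularity for the minimal surface equation across $\partial M$ exactly as in the proof of Lemma \ref{L:reflection}) minimal surface in the doubled chart. Leaves not touching $\partial M$ simply get a disjoint mirror copy. The key point is that the doubled family $\{\La \cup \theta(\La)\}$ is again a minimal lamination of the doubled chart in the sense of Definition \ref{D:min-lamination}, with curvature bound $2C$ or so (accounting for the ambient metric's deviation from the reflected one, which is controlled uniformly on the compact $M$). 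Applying \cite{CM13} to this doubled sequence yields a subsequential $C^\alpha$ limit lamination $\wti{\mc L}$, invariant under $\theta$ by uniqueness of the limit along a further subsequence, whose $\theta$-invariance forces the leaves to meet $\partial \R^3_+$ orthogonally; restricting to $\R^3_+$ gives the desired free boundary limit lamination in the chart.

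The remaining issue is to patch the charts together: a diagonal argument over the finite cover produces a single subsequence for which convergence holds on every chart, and on overlaps the limits agree because $C^\alpha$ limits are unique. One must check that the glued object $\mc L$ satisfies Definition \ref{D:min-lamination-free} at every point — i.e.\ that a point in an overlap of a type-(ii) and a type-(iii) chart still admits one of the three local models — which is automatic since each chart already furnishes such a model for the limit. I expect the main obstacle to be the careful bookkeeping near $\partial M$: verifying that the reflected family is genuinely a lamination (the local sheet/slice structure of Definition \ref{D:min-lamination} survives the reflection, including leaves tangent to $\partial M$), that the curvature bound survives doubling with a constant depending only on $C$ and the geometry of $(M,\partial M)$, and that the limit leaves are \emph{minimal} for the original metric (not the doubled one) — this last uses that $\theta$-invariant limits live in the fixed-point set where the two metrics agree along with their first derivatives, so the minimal surface equation passes to the limit. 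Elliptic regularity then upgrades the Lipschitz limit to smooth minimal leaves with free boundary, and the statement follows.
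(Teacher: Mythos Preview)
Your doubling strategy has a real gap: unless $\partial M$ is totally geodesic, the reflected leaves $\La\cup\theta(\La)$ are \emph{not} minimal in any smooth ambient metric, so \cite{CM13} cannot be applied as a black box. In Fermi coordinates $g=dt^2+g_{ij}(t,x)\,dx^i dx^j$ the only metric for which $\theta$ is an isometry is the doubled one $\tilde g$ with components $g_{ij}(|t|,x)$; its transverse derivative $\partial_t\tilde g_{ij}$ jumps from $\partial_t g_{ij}(0,x)$ to $-\partial_t g_{ij}(0,x)$ across $\{t=0\}$, so $\tilde g$ is merely Lipschitz, not $C^1$ (contrary to your ``isometry to second order'' claim). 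Your appeal to Lemma~\ref{L:reflection} is likewise misplaced: that lemma is specific to Euclidean space, where $\theta$ is an honest isometry and the reflected surface is genuinely minimal for the flat metric, whence elliptic regularity bootstraps $C^1$ to $C^\infty$. Here the reflected graph $\tilde u(t,x)=u(|t|,x)$ satisfies the minimal surface equation for $g$ on $\{t>0\}$ and for $\theta^*g$ on $\{t<0\}$ --- two different equations --- so there is no single smooth elliptic PDE to which interior regularity or the Colding--Minicozzi compactness applies.

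The paper does \emph{not} reflect; it reruns the Colding--Minicozzi chart construction directly on the Fermi half-ball. A tilt dichotomy first separates the case where no leaf of $\mc L_i$ meets $\partial M$ near $p$ (handled by \cite{CM13} in the extension $\wti M$) from the case where every nearby leaf is a graph $f_{i,k}$ over a fixed half-plane with the Neumann condition $\partial_t f_{i,k}(0,\cdot)=0$. The positive differences $w_{i,k}=f_{i,k+1}-f_{i,k}$ then solve a divergence-form uniformly elliptic equation with Neumann data, and the Harnack inequality \emph{up to the Neumann boundary} (\cite[8.20]{GiTr} together with \cite{ADN}, or equivalently interior De~Giorgi--Nash--Moser Harnack after even reflection, for $L^\infty$ coefficients) supplies the oscillation control needed to build uniformly bi-Lipschitz straightening maps $\Phi_i$ and pass to the limit via Arzel\`a--Ascoli. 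Your approach can be salvaged along exactly this route --- reflect and then invoke De~Giorgi--Nash--Moser for the resulting discontinuous-coefficient equation --- but at that point you have opened up \cite{CM13} and essentially reproduced the paper's argument rather than reduced to it.
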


\vspace{0.5em}


\begin{proof}[Proof of Theorem \ref{T:curvature estimates 2-D}]


We follow the same contradiction argument as in the proof of Theorem \ref{T:main-curvature-estimates} and adopt the same notions therein. After a blow-up process, we again face two types of convergence scenario. 
By Colding-Minicozzi's convergence theorem for minimal laminations with bounded curvature \cite[Proposition B.1]{CM13} (for Type I convergence) and Theorem \ref{T:2d-lamination convergence} (for Type II convergence), a subsequence of blowups 
converges to 
\begin{itemize}
\item a minimal lamination $\ti{\mc{L}}$ in $T_xM\simeq \R^3$, or
\item a minimal lamination $\mc{L}$ in $H$ with free boundary on $\partial H$.
\end{itemize}
In the second case, we can apply the lamination reflection principle (Lemma \ref{L:lamination reflection}) to obtain a minimal lamination $\ti{\mc{L}}$ in $T_x M \simeq \R^3$. 
By the blowup assumption, we know that the origin $0\in\R^3$ is in the support of $\ti{\mc{L}}$, and the curvature of the leaf $\La_0$ passing through $0$ is exactly $1$ at $0$, i.e. $|A^{\La_0}|(0)=1$.

Now we analyze the structure of the minimal lamination $\ti{\mc{L}}\subset\R^3$ for both cases. 
We refer to \cite{Li-Zhou16} for well-known terminologies for minimal laminations. If $\La\in \ti{\mc{L}}$ is an accumulating leaf, then either $\La$ or its double cover $\ti{\La}$ is a complete, stable minimal surface in $\R^3$, which must be an affine plane by the Bernstein theorem in $\R^3$ (see \cite{DoPe, FC-Schoen80}). Therefore, the leaf $\La_0$ passing through $0$ must be an isolated leaf. Since all the surfaces in the sequence $\Si'_i$ are stable with free boundary, the smooth convergence of $\Si'_i$ to $\ti{\mc{L}}$ or $\mc{L}$ and the reflection principle (Lemma \ref{L:reflection}) imply that $\La_0$ is a complete, stable, minimal surface in $\R^3$. This again violates the Bernstein theorem as $|A^{\La_0}|(0)=1$ by our construction. Therefore, we arrive at a contradiction and finish the proof of Theorem \ref{T:curvature estimates 2-D}.
\end{proof}

\section{Convergence of free boundary minimal submanifolds}
\label{S:convergence}

In this section, we prove a general convergence result (Theorem \ref{T:convergence}) for free boundary minimal submanifolds with uniformly bounded second fundamental form. Note that this convergence result does not require stability and holds in any dimension and codimension. 

To facilitate our discussion, let us first review some basic properties of \emph{Fermi coordinates}. Let $N^n \subset M^{n+1}$ be an embedded hypersurface (without boundary) in the Riemannian manifold $(M,g)$. We can assume that both $N$ and $M$ are complete. Fix a point $p \in N$, if we let $(x_1,\cdots,x_n)$ be the geodesic normal coordinates of $N$ centered at $p$, and $t=\dist_M(\cdot,N)$ be the signed distance function from $N$ which is well-defined and smooth in a neighborhood of $p$ inside $M$. Therefore, for $r_0>0$ sufficiently small, there exists a diffeomorphism, called a \emph{Fermi coordinate chart},
\[ \phi: B^{n+1}_{r_0}(0) \subset T_p M \to U \subset M \]
\[ (t,x_1,\cdots,x_n) \mapsto \phi(t,x_1,\cdots,x_n), \]
such that $U \cap N = \phi(\{t=0\})$. Here, $B^{n+1}_{r_0}(0)$ is the open Euclidean ball of $T_p M \cong \R^{n+1}$ of radius $r_0>0$ centered at $0$. We refer the readers to \cite[Section 2.2]{LiZ16} for a more detailed discussion on Fermi coordinates. The components of the metric $g$ in Fermi coordinates satisfy $g_{tt}=1$ and $g_{x_i t}=0$ for $i=1,\cdots,n$.


Let $(\Sigma,\partial \Sigma) \subset (M, N)$ be smooth embedded free boundary minimal $k$-dimensional submanifold, with $1 \leq k \leq n$. Fix any $p \in \partial \Sigma \subset N$, and let $\phi:B^{n+1}_{r_0}(0) \to U$ be a Fermi coordinate chart as above centered at $p$. After a rotation we can assume that 
\[ T_p (\partial \Sigma)=\{x_k=\cdots=x_n=0=t\} \cong \R^{k-1}.\]
Since $\Sigma$ meets $N$ orthogonally along $\partial \Si$, after picking a choice on the sign of $t$, the tangent half-space $T_p \Si$ is given by
\[ T_p \Sigma=\{x_k=\cdots=x_n=0, \, t\geq 0\} \cong \R^k_+.\]
Hence, under the Fermi coordinates in a neighborhood of $p$, $\Sigma$ can be written as a graph of $u=(u_1,\cdots,u_{n+1-k})$ which is a $\R^{n+1-k}$-valued function of $(t,x')=(t,x_1,\cdots,x_{k-1})$ in a domain of $\R^k_+$, i.e.
\[ \phi^{-1}(\Sigma) = \{(t,x',u(t,x'))\} \subset \R^{n+1}_+.\]
Moreover, $\phi^{-1}(\partial \Si)$ is given by the same graph with $t=0$. Since $\frac{\partial}{\partial t}$ is a unit normal vector field along $N \cap U$, it is clear that the free boundary condition along $\partial \Si$ is equivalent to 
\begin{equation}
\label{E:freebdy-equation}
\frac{\partial u_\ell}{\partial t} (0,x') =0 \quad \text{ for $\ell=1,\cdots,n+1-k$.}
\end{equation}

We now state the convergence result for free boundary minimal submanifolds with uniformly bounded area and the second fundamental form.

\begin{theorem}
\label{T:convergence}
Suppose we have a sequence $(\Si_j,\partial \Si_j) \subset (M,N)$ of immersed free boundary minimal $k$-dimensional submanifolds, where $1 \leq k \leq n$, with uniformly bounded area and second fundamental form, i.e. there exist positive constants $C_0,C_1>0$ such that
\[ \Area(\Si_j) \leq C_0 \quad \text{ and } \quad \sup_{\Si_j} |A^{\Si_j}| \leq C_1 \]
for all $j$, then after passing to a subsequence, $(\Si_j, \partial \Si_j)$ converges smoothly and locally uniformly to $(\Si_\infty,\partial \Si_\infty) \subset (M,N)$ which is a smooth immersed free boundary minimal $k$-dimensional submanifold.
\end{theorem}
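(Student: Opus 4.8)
The plan is to reduce the convergence statement to a local graphical statement via Fermi coordinates and then invoke standard elliptic estimates for the minimal surface equation with a Neumann-type boundary condition. First I would fix an arbitrary point $q$ in the limit and distinguish two cases depending on whether $q$ lies on $N$ or in the interior $M\setminus N$. The interior case is classical: around a point $p_j \in \Si_j$ converging to $q \notin N$, the uniform bound $|A^{\Si_j}| \leq C_1$ gives a uniform lower bound on the size of a ball in $\Si_j$ on which $\Si_j$ is a graph over $T_{p_j}\Si_j$ with bounded gradient, and the minimal surface system then yields uniform $C^{1,\alpha}$ bounds, hence (by Schauder and bootstrapping, using smoothness of $g$) uniform $C^{k,\alpha}$ bounds for every $k$; a diagonal/Arzel\`a--Ascoli argument produces a smoothly convergent subsequence whose limit solves the minimal surface system and is therefore a smooth minimal submanifold. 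The uniform area bound $\Area(\Si_j)\leq C_0$ is used to control the number of sheets passing through a fixed ball so that the local pieces glue into a genuine immersed limit with locally finite area.

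The boundary case is where the real work is. Near a point $p_j \in \partial\Si_j$ with $p_j \to q \in N$, I would use the Fermi coordinate chart $\phi$ described just before the theorem: after a rotation $\Si_j$ is, on a ball of radius bounded below (thanks again to $|A^{\Si_j}|\leq C_1$, now applied together with the fact that $\partial\Si_j$ meets $N$ orthogonally, so the second fundamental form controls how fast the tangent half-plane tilts up to the boundary), a graph $u^{(j)}=(u^{(j)}_1,\dots,u^{(j)}_{n+1-k})$ over a fixed half-disk in $\R^k_+$ with $|u^{(j)}| + |Du^{(j)}|$ uniformly bounded, and the free boundary condition becomes the Neumann condition $\partial_t u^{(j)}_\ell(0,x')=0$ from equation \eqref{E:freebdy-equation}. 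The minimality of $\Si_j$ makes $u^{(j)}$ solve a quasilinear elliptic system whose coefficients depend smoothly on $g$ (written in Fermi coordinates, where $g_{tt}=1$, $g_{x_it}=0$). Applying boundary Schauder estimates for the oblique/Neumann problem gives uniform $C^{1,\alpha}$ bounds up to $\{t=0\}$; differentiating the equation and the boundary condition and bootstrapping yields uniform $C^{k,\alpha}$ bounds up to the boundary for all $k$. Arzel\`a--Ascoli then gives a subsequence of the $u^{(j)}$ converging in $C^\infty_{loc}$ to a limit $u^{(\infty)}$ which still satisfies the minimal surface system and the Neumann condition, i.e.\ the limit is a smooth free boundary minimal submanifold locally near $q$.

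Finally I would assemble the global statement. Cover the (a priori only topologically understood) limit by countably many such coordinate balls, extract a diagonal subsequence along which $\Si_j$ converges smoothly and locally uniformly on each ball, and observe that the local limits patch together consistently. The uniform area bound guarantees that only finitely many sheets of $\Si_j$ enter any fixed ball, so the limit $(\Si_\infty,\partial\Si_\infty)$ is a smooth immersed $k$-submanifold of $(M,N)$ with $\partial\Si_\infty \subset N$; passing to the limit in the minimality equation and in \eqref{E:freebdy-equation} shows $\Si_\infty$ is minimal and meets $N$ orthogonally along $\partial\Si_\infty$, so it is a free boundary minimal submanifold.

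\textbf{Main obstacle.} The delicate point is the boundary regularity near $q\in N$: one must (i) produce a \emph{uniform} scale on which $\Si_j$ is a controlled graph up to its free boundary --- this requires knowing that near $\partial\Si_j$ the surface cannot fold back or become vertical, which follows from combining the orthogonal intersection with $N$ and the bound on $|A^{\Si_j}|$, but needs to be argued carefully since $p_j$ is only assumed to converge, not to lie exactly on a fixed chart --- and (ii) apply the correct version of boundary Schauder theory for the Neumann condition \eqref{E:freebdy-equation} for the (possibly high-codimension) quasilinear minimal surface system, tracking that all constants depend only on $C_0$, $C_1$, and the fixed geometry of $(M,N)$. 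The interior part and the final patching are routine by comparison.
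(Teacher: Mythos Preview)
Your proposal is correct and follows essentially the same route as the paper: split into interior and boundary regimes, use the uniform $|A|$ bound to write $\Si_j$ locally as a bounded-gradient graph (in Fermi coordinates near $N$, where the free boundary condition becomes the Neumann condition \eqref{E:freebdy-equation}), apply elliptic theory (interior, respectively Neumann-boundary Schauder/ADN) to get uniform higher estimates, and use the area bound to control the number of sheets before patching via Arzel\`a--Ascoli. The only point the paper makes explicit that you leave implicit is that the sheet-count bound comes from combining the area bound with the monotonicity formula (Theorem~\ref{T:monotonicity}), not from the area bound alone.
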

\begin{proof}
The convergence away from $N$ follows from the classical convergence results. By the second fundamental form bound, we can cover $N$ by balls (of a uniform size) under Fermi coordinates centered at $p \in N$ such that each $\Si_j$ can be written as graphs over some domain of $T_p \Si_j$ with uniformly bounded gradient (see \cite[\S 2 Lemma 2.4]{CM11}). Using the uniform area bound together with the monotonicity formula (Theorem \ref{T:monotonicity}), there is a uniform upper bound on the number of sheets of the graphs. After passing to a subsequence, the number of sheets remains constant for all $j$ and each sheet is a graph over a $k$-dimensional subspace of $T_p M$ or a $k$-dimensional half-space orthogonal to $T_pN$. The first case again follows from the classical interior convergence result. The second case follows from standard elliptic PDE theory with Neumann boundary conditions (\ref{E:freebdy-equation}) (see \cite{ADN} for example).
\end{proof}


\section{Convergence of free boundary minimal lamination}
\label{S:lamination convergence}

Finally, we give the proof of Theorem \ref{T:2d-lamination convergence} which was used in section \ref{S:2d-curvature-estimates}.

\begin{proof}[Proof of Theorem \ref{T:2d-lamination convergence}]
For simplicity we will assume that each lamination $\mc{L}_i$ has finitely many leaves where the number of leaves may depend on $i$; this will suffice for our application.
For any interior point $x \in M \setminus \partial M$, the argument used in the proof of \cite[Proposition B.1]{CM13} implies the convergence in a small neighborhood of $x$ in $M \setminus \partial M$. Hence, we only need to deal with the convergence near a boundary point $x \in \partial M$.

Fix $p\in \partial M$ and let $N=\partial M$. The theorem will follow once we construct uniform coordinate charts in a small neighborhood of $p$ in the Fermi coordinate system as in section \ref{S:convergence}.
Let $\varphi$ be a Fermi coordinate chart in a relatively open neighborhood $U$ of $p$ in $M$, i.e.,
\[\varphi : U \subset M \to \wti{U}\subseteq \mathbb{R}_{+}^3,\]
such that $\varphi (p)=0$ and $\varphi(N\cap U)=\{x_1=0\}\cap \wti{U}$.
Here, $(x_1,x_2,x_3)$ are the local Fermi coordinate system centered at $p$ (i.e. $t=x_1$). Suppose that $B_{4r_0}^+ \subset \wti{U}$ for some small $r_0$ to be chosen later,  where $B_{4r_0}^+=B_{4r_0}\cap \{x_1\geq 0\}$ denotes the half ball in $\mathbb{R}_+^3$ with radius $4r_0$ centered at the origin. 

Next, we will construct uniform coordinate charts on $\varphi^{-1}(B_{r_0}^+)$. 
Note that for each $i$ and every $\Lambda\in \mc{L}_i$, we have
$\sup_{\Lambda}|A^\La|^2\leq C.$
We may choose $r_0$ sufficiently small so that $Cr_0$ is as small as we wish. 
Then for each fixed $i$, 
\[\bigcup_{\Lambda\in \mc{L}_i} \varphi(\Lambda\cap U)\cap B_{4r_0}^+  \] 
gives a finite number of disconnected surfaces with bounded curvature in the Fermi coordinate system. 

Since the lamination has uniformly bounded curvature, by the tilt estimates as in the proof of \cite[Lemma 2.11]{CM11}, there exists a constant $\delta>0$ such that for each lamination $\mathcal{L}_i$, 
we have the following two cases: (i) none of the leaves of $\mathcal{L}_i$ meets $\partial \mathbb{R}_{+}^3$ in 
$B_{\delta r_0}^+$ (except possibly for one leaf touching $\partial \R_+^3$ tangentially at some points); (ii) there exists a leaf of $\mathcal{L}_i$ meeting $\partial \mathbb{R}_{+}^3$ along some non-empty free boundary. For case (i), we can construct uniform coordinate charts as in the proof of \cite[Proposition B.1]{CM13} in a neighborhood of the larger manifold $\wti{M}$. For case (ii), we claim that in 
$B_{2\delta r_0}^+$, all leaves of $\mathcal{L}_i$ which intersect $B_{\delta r_0}^+$ must meet $\partial \R_+^3$ along some non-empty free boundary; otherwise, the tilt estimates will imply that two leaves intersect somewhere in $B_{r_0}^+$ which contradicts the assumption that all leaves are disjoint. Note that the tilt estimates in \cite[Lemma 2.11]{CM11} only use the uniform curvature bound of leaves in $\mc{L}_i$, but not the minimal surface equations.

Now, we  focus on case (ii). For simplicity, we  use $r_0$ to denote $\delta r_0$. The free boundary condition 
and the choice of Fermi coordinates imply that these surfaces meet $\partial \R^3_+$ orthogonally in the Euclidean metric. Going to a further subsequence (possibly with $r_0$ even smaller), for  fixed $i$, every sheet of
\[\bigcup_{\Lambda\in \mc{L}_i} \varphi(\Lambda\cap U)\cap B_{2r_0}^+, \] 
which intersects $B_{r_0}^+$ is a graph with small gradient over a subset of certain fixed plane perpendicular to $\partial \R^3_+$ (which can be chosen as $\R^2\times \{0\}:=\{x^3=0\}$ after a rotation keeping $\partial \R^3_+$ fixed as a set) containing a half ball of radius $r_0$ (see \cite[Lemma 2.4]{CM11}).


We will show that in a concentric half ball of smaller radius in 
$B_{2r_0}^+$, the sequence of laminations converges in the $C^\alpha$ topology to a lamination for any $\alpha<1$. The coordinate chart $\Phi$ required by the definition of a lamination will be given by the Arzela-Ascoli theorem as a limit of a sequence of bi-Lipschitz maps
\[ \Phi_i:B_{2r_0}^+\to \mathbb{R}^3_+\] 
with bounded bi-Lipschitz constants, and $\Phi$ will be defined on a slightly smaller concentric half ball $B_{sr_0}^+$ for some $s>0$ to be determined. Furthermore, we will show that for each $i$ fixed
\[\Phi_i\big(B_{sr_0}^+\cap \varphi(\cup_{\Lambda\in \mc{L}_i}\Lambda\cap U)\big)\] 
is the union of subsets of planes which are each parallel to $\mathbb{R}^2\times\{0\}\subseteq \mathbb{R}^3_+.$

Set the map $\Phi_i$ by letting 
\[\Phi_i^{-1}(y_1,y_2,y_3)=(y_1,y_2,\phi_i(y_1,y_2,y_3)),\] where $\phi_i$ is defined as follows: order the sheets of $B_{2r_0}^+\cap \varphi(\cup_{\Lambda\in \mc{L}_i} \Lambda\cap U)$ as $\Lambda_{i,k}$  for $k=1,\ldots$ by increasing values of $x_3$ and let $\Lambda_{i,k}$  be the graph of the function $f_{i,k}$ over (part of) the $\mathbb{R}^2\times \{0\}$ plane. In the following we only need to consider those sheets $\Lambda_{i, k}$ where $\Lambda_{i, k}\cap B^+_{r_0}\neq \emptyset$, since we eventually will work on a much smaller concentric half ball. The domain of such $f_{i,k}$ contains the half ball of radius $r_0$ centered at the origin of the $\mathbb{R}^2\times\{0\}$ plane. Again as $Cr_0$ can be chosen small enough, we can assume that $|\nabla f_{i, k}|$ are as small as we want.
Moreover, the free boundary condition satisfied by $\Lambda_{i, k}$ is equivalent to the Neumann boundary condition:
\begin{equation}
\frac{\partial f_{i,k}(0,\cdot)}{\partial x_1} =0.
\end{equation}

Set $w_{i,k}=f_{i,k+1}-f_{i,k}$. In the following, $\Delta$, $\nabla$, and $\text{div}$ will be with respect to the Euclidean metric on $\mathbb{R}^2\times\{0\}$. By a standard computation (cf. \cite[Chapter 7]{CM11} or \cite[(7)]{Si87}), we have
\begin{equation}\label{equ:2d:main:diff}
\text{div}((a+Id)\nabla w_{i,k})+b\nabla w_{i,k}+cw_{i,k}=0,\,\text{ and }\,\,\frac{\partial w_{i,k}(0,\cdot)}{\partial x_1}=0,
\end{equation}
where $a$ is a matrix-valued function, $b$ is a vector-valued function, and $c$ is simply a real-valued function.

Note that $a$, $b$, and $c$ depend on $i$,  but the norms of $a, b, c$ can be made uniformly small if $Cr_0$ is small enough and if we rescale our ambient manifold by a large factor. By (\ref{equ:2d:main:diff}), and the Harnack inequality (see \cite[8.20]{GiTr} and \cite[Section 6]{ADN}) applied to the positive function $w_{i,k}$ gives
\begin{equation}\label{equ:2d:main:harnack}
 \sup_{\mathbb{B}_{2sr_0}^+} w_{i,k}\leq 
 C_1 \inf_{\mathbb{B}_{2sr_0}^+}w_{i,k},
\end{equation}
where $C_1$ depends only on the norms of $a, b$ and $c$. 
Here, $\mathbb{B}_t^+$ is the half ball in $\mathbb{R}^2\times \{0\}$ with radius $t$ and center 0. Set $\mathbf{M}_{i,k}=f_{i,k}(0,0)$. In the region \[\{(y_1,y_2,y_3)\in \mathbb{B}_{r_0}^+\times [\mathbf{M}_{i,k},\mathbf{M}_{i,k+1}]\},\] define the function $\phi_i$ by
\[\phi_i(y_1,y_2,y_3)=f_{i,k}(y_1,y_2)+\frac{y_3-\mathbf{M}_{i,k}}{\mathbf{M}_{i,k+1}-\mathbf{M}_{i,k}}w_{i,k}(y_1,y_2).\]
Hence, \[\Phi_i^{-1}\big(y_1,y_2,f_{i,k}(0,0)\big)=\big(y_1,y_2,f_{i,k}(y_1,y_2)\big);\] that is, $\Phi_i$ maps $\Lambda_{i,k}$ to a subset of the plane $\mathbb{R}^2\times\{f_{i,k}(0,0)\}$.

Note that $\phi_i(0, 0, 0)=0$.
Moreover, we have
\begin{equation}\label{equ:2d:main:grad_phi}
\nabla \phi_i= \nabla f_{i,k}+ \frac{y_3-\mathbf{M}_{i,k}}{\mathbf{M}_{i,k+1}-\mathbf{M}_{i,k}} \nabla w_{i,k} + \frac{w_{i,k}}{\mathbf{M}_{i,k+1}-\mathbf{M}_{i,k}}\frac{\partial}{\partial y_3}.
\end{equation}
By (\ref{equ:2d:main:harnack}) and (\ref{equ:2d:main:grad_phi}), we know that for each $i$ the map $\Phi_i$ restricted to $B_{sr_0}^+\subseteq \mathbb{R}_+^3$ is bi-Lipschitz with uniformly bounded bi-Lipschitz constant.

By the Arzela-Ascoli theorem, a subsequence of $\Phi_i$ converges in the $C^\alpha$ topology for any $\alpha<1$ to a Lipschitz coordinate chart $\Phi$ with the properties that are required. 
By standard elliptic regularity theory, the leaves are either minimal surfaces (for the first case) or minimal surfaces  with free boundary on $N$ (for the second case).
\end{proof}


\bibliographystyle{amsplain}
\bibliography{reference-curv-estimate}

\end{document}